\renewcommand{\mid}{|}
\newcommand{\lbra}{[\![}
\newcommand{\rbra}{]\!]}
\newcommand{\one}{{\mathbh1}}
\newtheorem{proposition}{Proposition}[section]
\newtheorem{theorem}{Theorem}[section]
\newtheorem{corollary}{Corollary}[section]
\newtheorem{lemma}{Lemma}[section]
\begin{document}
\begin{frontmatter}

\title{Weak approximations for Wiener functionals}
\runtitle{Weak approximations for Wiener functionals}

\begin{aug}
\author[A]{\fnms{Dorival} \snm{Le\~{a}o}\ead[label=e1]{leao@icmc.usp.br}}
\and
\author[B]{\fnms{Alberto} \snm{Ohashi}\corref{}\thanksref{t3}\ead[label=e3]{ohashi@mat.ufpb.br}}
\runauthor{D. Le\~{a}o and A. Ohashi}
\affiliation{Universidade de S\~{a}o Paulo, and Universidade Federal da Paraiba and~Insper~Institute}
\address[A]{Departamento de Matem\'atica Aplicada\\
\quad e Estat\'istica\\
Universidade de S\~{a}o Paulo\\
13560-970 S\~ao Carlos, SP\\
Brazil\\
\printead{e1}}
\address[B]{Departamento de Matematica\\
Universidade Federal da Paraiba\\
58051-900, Joao Pessoa\\
Brazil\\
\printead{e3}} 
\end{aug}

\thankstext{t3}{Supported by CNPq Grant 308742.}

\received{\smonth{3} \syear{2012}}

%
\begin{abstract}
In this paper we introduce a simple space-filtration discretization
scheme on Wiener space which allows us to study weak decompositions and
smooth explicit approximations for a large class of Wiener functionals.
We show that any Wiener functional has an underlying robust
semimartingale skeleton which under mild conditions converges to it.
The discretization is given in terms of discrete-jumping filtrations
which allow us to approximate nonsmooth processes by means of a
stochastic derivative operator on the Wiener space. As a by-product, we
provide a robust semimartingale approximation for weak Dirichlet-type
processes.

The underlying semimartingale skeleton is intrinsically constructed in
such way that all the relevant structure is amenable to a robust
numerical scheme. In order to illustrate the results, we provide an
easily implementable approximation scheme for the classical
Clark--Ocone formula in full generality. Unlike in previous works, our
methodology does not assume an underlying Markovian structure and does
not require Malliavin weights. We conclude by proposing a method that
enables us to compute optimal stopping times for possibly non-Markovian
systems arising, for example, from the fractional Brownian motion.
\end{abstract}

%
\begin{keyword}[class=AMS]
\kwd[Primary ]{60Hxx}
\kwd[; secondary ]{60H20}
\end{keyword}
\begin{keyword}
\kwd{Weak convergence}
\kwd{Clark--Ocone formula}
\kwd{optimal stopping}
\kwd{hedging}
\end{keyword}

\end{frontmatter}

\section{Introduction}\label{sec1}
Discretization methods for stochastic systems have always been a topic
of great interest
in stochastic analysis and its applications. Since the pioneering work
of Wong and Zakai we
know that not every choice of discretization procedure leads to good
stability properties
of elementary processes such as It\^o integrals and related stochastic
equations. See, for example, the works
\cite{slominski,protter,jacod1,Barlow1,coquet3} and other references therein.

In order to get those convergence results, one has to assume suitable
compactness arguments which allow
one to exchange the limits. On the one hand, one may interpret such
assumptions as simple technical arguments
imposed on the system to get the desirable robustness. On the other
hand, Graversen and Rao~\cite{gr}
have shown a close relation between finite energy and the existence of
Doob--Meyer-type decompositions. More recently, Coquet et~al.
\cite{coquet2} has proved the uniqueness of such decompositions by means of
the so-called weak Dirichlet processes.

The classical Graversen--Rao theorem can be proved by means of
compactness arguments on predictable compensators of simple
time-discretizations of the original process. In general, approximating
sequences arising from such compactness arguments are not intrinsically
constructed and are not suitable for numerical schemes in nonstandard
cases arising from non-Markovian and nonsemimartingale systems.

The primary goal of this work is to describe readable structural
conditions on a given optional process adapted to the Brownian
filtration (henceforth abbreviated by Wiener functional) so that one
can construct an \textit{explicit}, \textit{robust} and \textit
{feasible} approximating skeleton of smooth semimartingales. In order
to illustrate the basic idea, let us assume that a Wiener functional
$X$ has an abstract representation
%
\begin{equation}
\label{wienerfun} X_t=X_0 + \int_0^t
H_s\,dB_s + N_t,
\end{equation}
where $B$ is the standard Brownian motion under its natural filtration
$\mathbb{F}$, $N$ can be a nonsemimartingale $\mathbb{F}$-optional
process and $H$ is a progressive process which is completely unknown a
priori. The main problem addressed in this paper is the following one:
Construct an explicit and simple sequence of $\mathbb{F}^k$-special
semimartingales given by
\[
X^k = X_0 + \int H^k\,dA^k +
N^k, \qquad\mathbb{F}^k\subset\mathbb{F},
\]
where $H^k$ is fully based on the information generated by the pair
$(X, B)$ such that
%
\begin{eqnarray}
\label{qq} H^k&\rightarrow& H,\qquad A^k\rightarrow B,\qquad \int
H^ k\,dA^ k\rightarrow\int H\,dB,\nonumber\\[-8pt]\\[-8pt]
N^k&\rightarrow& N,\qquad
\mathbb{F}^k\rightarrow\mathbb{F}\qquad\mbox{as }
k\rightarrow\infty.\nonumber
\end{eqnarray}

The main difficulty in answering this question comes from the fact that
when $X$ is very rough, the joint convergence of $(H^k, \int H^k\,dA^k)$
to $(H, \int H\,dB)$ in general will not hold since $H$ has no a priori
path regularity. Similarly, $N$ can be very irregular in such a way
that $N^k\rightarrow N$ will not hold either. A~similar type of problem
was addressed by Jacod, Meleard and Protter~\cite{jacod1} in a pure
martingale and Markovian setup at a fixed terminal time $0 < T \le
\infty$. In
\cite{jacod1}, they have provided reasonable explicit expressions for
$H^k$ when $B$ and $N$ are replaced by orthogonal square-integrable
martingales w.r.t. an arbitrary filtration. More explicit expressions\vadjust{\goodbreak}
were obtained by imposing an underlying Markovian structure. In this
paper, we are interested in somehow more irregular objects arising from
non-Markovian and nonsemimartingale systems restricted to the Wiener space.

In order to study Wiener functionals of type (\ref{wienerfun}), an
abstract theory is developed based on an underlying smooth
semimartingale skeleton induced by a suitable sequence of stopping
times which measures the instants when the Brownian motion hits some a
priori levels. More precisely, from a given Brownian motion $B$ we
shall define inductively a sequence of stopping times
\[
T^k_n:= \inf\bigl\{ T^k_{n-1}< t <
\infty; | B_t - B_{T^k_{n-1}}| = 2^{-k}\bigr\},\qquad n \ge1,
\]
which induces an embedded semimartingale structure of the form
\[
\delta^k X_t:= X_0 + \sum
_{n=1}^\infty\mathbb{E}\bigl[X_{T^k_n}|\mathcal
{G}^k_n\bigr] \one_{\{ T_{n}^{k} \leq t <
T^k_{n+1} \}},\qquad 0\le t \le T,
\]
for a suitable family $(\mathcal{G}^k)$ of discrete-time filtrations.
By the very definition, $\delta^kX$ should be interpreted as a
space-filtration discretization scheme.

In this work, we prove that under mild conditions which are similar in
nature to weak Dirichlet-type processes, $\delta^k X$ induces a robust
skeleton $(A^k,\int H^k\,dA^k, N^k, \mathbb{F}^k)$ which realizes (\ref
{qq}) in suitable topologies. Beyond that, and more importantly for
applications, the skeleton is amenable to a feasible numerical analysis
by means of perfect simulations of the first-passage times of the
Brownian motion (see Burq and Jones~\cite{burq}).

The second part of this article is devoted to the application of our
abstract results to the pure martingale case. To illustrate the
techniques developed in this paper, we present a step-by-step
simulation method for the Clark--Ocone formula in full generality.
Recall that if $Y\in L^2(\mathcal{F}_T)$, then
\[
Y=\mathbb{E}[Y] + \int_0^T
\mathbb{E}[D_sY|\mathcal{F}_s]\,dB_s,
\]
where $D$ stands for the Gross--Sobolev derivative on the
Gaussian space of the Brownian motion. The process
$\mathbb{E}[DY|\mathcal{F}]$ has great importance in mathematical
finance because it is the fundamental quantity for the hedging problem
in a complete Brownian-based market (see, e.g.,~\cite{karatzas}).
However, the practical implementation of the Clark--Ocone formula is
still an open problem mainly because $D_tY$ is only amenable to
numerical schemes in very particular cases such as elliptic systems
where the Malliavin weights can be efficiently used. See, for example,
\cite{FOURNIE1,higa,elie,ben} for a complete discussion on this matter.

In this article, we propose a rather different approach based on the
sequence of stochastic ratios
%
\begin{equation}
\label{optionalpr} \frac{\mathbb{E}[Y|\mathcal{G}^k_n]- \mathbb{E}[Y|\mathcal
{G}^k_{n-1}]}{B_{T^k_n}- B_{T^k_{n-1}}};\qquad k,n\ge1.
\end{equation}

Unlike in previous works (see, e.g.,~\cite{dinunno,dinunno1}), the
approximation scheme given in (\ref{optionalpr}) is intrinsic and it is
rather explicit without imposing smoothness in the sense of Malliavin
calculus and no underlying Markovian structure is assumed (see also,
e.g.,~\cite{aase,FOURNIE1}). Moreover, no functional pathwise
smoothness is required in the approximation of $\mathbb{E}[DY|\mathcal
{F}]$ (see Dupire~\cite{dupire} and Cont and Fournie~\cite{cont} for
some results in this direction). More importantly for applications,
$\mathbb{E}[DY|\mathcal{F}]$ is the limit of functionals of (\ref
{optionalpr}) which are fully described by the sequences of smooth
i.i.d. stopping times $(T^k_n-T^k_{n-1})_{n\ge1}$ and the Bernoulli
variables $(B_{T^k_n}-B_{T^k_{n-1}})_{n\ge1}$. This makes our
approximation explicit and easily implementable for a very large class
of payoffs. Based on (\ref{optionalpr}), we present a step-by-step
simulation method for the Clark--Ocone formula. To the best of our
knowledge, the proposed methodology is the only one capable of
simulating $\mathbb{E}[DY|\mathcal{F}]$ for arbitrary square-integrable
$\mathcal{F}_T$-random variables.

In the last part of the article, we illustrate our discretization
scheme with optimal stopping problems arising in non-Markovian systems.
We propose an algorithm fully based on our discretization scheme which
allows us to simulate value functions and the optimal stopping times
for continuous Wiener functionals arising in genuinely non-Markovian
cases such as, for example, the fractional Brownian motion.

The remainder of the article is structured as follows. In Section~\ref{sec2}, we
fix the notation and we give some preliminary results regarding the
pre-limit sequence and its basic properties. In Section~\ref{sec3}, we establish
the convergence of the semimartingale skeleton. Section~\ref{sec4} is devoted to
the stochastic derivative. In Section~\ref{sec5}, a step-by-step algorithm to
simulate the Clark--Ocone formula is presented. Section~\ref{sec6} presents an
optimal stopping time algorithm based on the discretization scheme
developed in this article.

\section{Preliminaries}\label{sec2}
In this section we fix the basic notation and framework that we
use in this paper and present some elementary results concerning
our approximation scheme. Throughout this paper we are given the
usual stochastic basis $(\Omega, \mathbb{F},
\mathcal{F},\mathbb{P})$ of the standard Brownian motion $B$ starting from
$0$, where $\Omega$ is the set $\mathcal{C}(\mathbb{R}_+;
\mathbb{R}):= \{f\dvtx \mathbb{R}_+\rightarrow
\mathbb{R} \mbox{ continuous}; f(0)=0\}$, $\mathcal{F}$ is the
completed Borel sigma algebra, $\mathbb{P}$ is the Wiener measure
on $\Omega$ and $\mathbb{F}:=(\mathcal{F}_t)_{t\ge0}$ is the
usual $\mathbb{P}$-augmentation of the natural filtration
generated by the Brownian motion. We denote by $\mathcal{O}$ the
optional sigma algebra with respect to
$\mathbb{F}$.

For each positive integer $k$, we define $T^k_0 = 0$ a.s. and
%
\begin{equation}
\label{stoppingtimes} T^k_n:= \inf\bigl\{
T^k_{n-1}< t <\infty; | B_t - B_{T^k_{n-1}}|
= 2^{-k}\bigr\},\qquad n \ge1.
\end{equation}
One should notice that $(T^k_n)_{n\ge0}$ is an exhaustive sequence of
$\mathbb{F}$-stopping times for every $k$ where $\{T^k_{n} -
T^k_{n-1}\}_{n=1}^\infty$ is an i.i.d. sequence. Next we consider the
following family of random variables:
%
\begin{equation}
\label{sigmakn} \sigma^k_n:=\cases{ 1; &\quad if
$B_{T^k_n} - B_{T^k_{n-1}} = 2^{-k}$ and $T^k_n
< \infty$,
\cr
-1; &\quad if $B_{T^k_n} - B_{T^k_{n-1}} = -2^{-k}$
and $T^k_n < \infty$,
\cr
0; &\quad if $T^k_n
= \infty$.}
\end{equation}
We then define the following sequence of step processes as
\[
A^k_t:= \sum_{n=1}^{\infty}2^{-k}
\sigma^k_n\one_{\{ T_{n}^{k}
\leq t \}},\qquad 0\le t < \infty; k\ge1.
\]
For each $k\ge1$, let $(\mathcal{F}^k_t)_{t\ge0}$ be the
natural filtration generated by $\{A^k_t; 0\le t < \infty\}$. One
should notice that $(\mathcal{F}^k_t)_{t\ge0}$ is a discrete-type
filtration (see, e.g.,~\cite{yan}, page~321) in the sense that
%
\begin{equation}
\label{discrete} \mathcal{F}^k_t = \bigcup
_{i=0}^{\infty} \bigl( \mathcal{G}^k_i
\cap\bigl\{T^k_i \le t < T^k_{i+1}
\bigr\} \bigr),\qquad t \ge0,
\end{equation}
where $\mathcal{G}^k_0:= \{\Omega, \varnothing\}$ and
$\mathcal{G}^k_n:=\mathcal{F}^k_{T^k_n}=\sigma(T^k_1, \ldots,
T^k_n, \sigma^k_1, \ldots, \sigma^k_n)$. Moreover, since
$\mathcal{G}^k_n = \sigma(A^k_{s\wedge T^k_n}; s \ge0)$ then
$\mathcal{G}^k_n$ and $\mathcal{F}^k_t$ coincide up to
$\mathbb{P}$-null sets on $ \{T^k_n \le t < T^k_{n+1} \}$.
In other words, $(\mathcal{F}_t^k)_{t\ge0}$ is a jumping filtration
(e.g.,~\cite{jacod})
with jumping sequence given by $(T^k_n)_{n\ge1}$ for each $k\ge
1$. With a slight abuse of notation we write $\mathcal{F}^k_t$ to
denote its $\mathbb{P}$-augmentation satisfying the usual
conditions, where $\mathbb{F}^k:= (\mathcal{F}^k_t)_{t\ge0}$. We
also denote by $\mathcal{O}^k$ and $\mathcal{P}^k$ the optional
and predictable sigma algebras, respectively, with respect to
$\mathbb{F}^k$.

In this work, the $\mathbb{F}^k$-dual predictable and optional
projections of a real-valued measurable process $Y$ will be denoted by
$[Y]^{p,k}$ and $[Y]^{o,k}$, respectively. We also denote by $[X,Y]$
and $\langle X,Y\rangle$ the usual quadratic variation and predictable
bracket of a pair of semimartingales, respectively. The usual jump of a
process is denoted by $\Delta Y_t
= Y_t - Y_{t-}$ where $Y_{t-}$ is the left-hand limit of a c\`adl\`ag
process $Y$. We set $Y_{0-}=Y_0$ for convenience. Moreover, if $T$ and
$S$ are stopping times, then $\lbra T,S\rbra $, $\lbra T,S\lbra $ and $\rbra T,S\rbra $ will
denote the usual stochastic intervals. From now on we fix a terminal
time $ 0 < T < \infty$.

We now give some elementary properties of our discretization scheme.

\begin{lemma}\label{useful}
For each $k\ge1$, $\{A^k_t; 0 \le t \le T \}$ is an $\mathbb
{F}^k$-martingale with locally
integrable variation such that
%
\begin{equation}
\label{approximationbm}
\sup_{0\le t \le T} \bigl\|B_t - A^k_t \bigr\|_{\infty} \le2^{-k},
\end{equation}
where $\|\cdot\|_{\infty}$ denotes the usual norm on the
space $L^{\infty}(\mathbb{P})$. Moreover, $\mathbb{F}^k$ is a quasi
left-continuous filtration and it supports only martingales of bounded
variation.
\end{lemma}
\begin{pf}
The estimate (\ref{approximationbm}) and the locally integrable
variation property are
immediate consequences of the definitions.\vadjust{\goodbreak} For the martingale
property we notice from (\ref{discrete}) that we can write
\[
\mathcal{F}^k_t = \Biggl\{ \bigcup
_{n=0}^{\infty} A_n \cap\bigl[T^k_n
\le t < T^k_{n+1}\bigr]; A_n \in
\mathcal{G}^k_n, n\ge0 \Biggr\},\qquad t\ge0,
\]
where $A^k_s = B_{T^k_n}$ on $[T^k_n \le s < T^k_{n+1}]$ for
each $n\ge1$. In this case, the usual
optional stopping theorem gives the representation (see also Remark \ref
{martingaleprojectionremark})
\[
\mathbb{E}\bigl[B_T|\mathcal{F}^k_t\bigr] =
A^k_t\qquad\mbox{a.s.},\qquad 0\le t\le T,
\]
and therefore we may conclude that $A^k$ is an
$\mathbb{F}^k$-martingale. For the second part, we notice that since
$T^k_1$ is an absolutely continuous random variable and $A^k$ is a
point process, then in this case it is well known that $\mathbb{F}^k$
is a quasi left-continuous filtration. The fact that every $\mathbb
{F}^k$-martingale has bounded variation is a consequence of~\cite{jacod}.
\end{pf}

In the sequel, we denote by $\pi$ the usual projection of $\mathbb
{R}_+\times\Omega$ onto $\Omega$. For any measurable sets $D$ and $A$
we write $ D - A$ to denote $D \cap A^c $, where $A^c$ is the
complement of the set $A$. Moreover, $\bigvee_{k\ge0}\mathcal{A}_k$
denotes the\vspace*{1pt} sigma-algebra generated by
$\bigcup_{k\ge0}\mathcal{A}_k$ for a sequence of classes
$\{\mathcal{A}_k; k\ge0\}$.
%
\begin{lemma}\label{smoothfiltration}
The natural filtration of $A^k$ satisfies the following properties:

\begin{longlist}[(iii)]
\item[(i)] $\{\mathbb{F}^k;k\ge1\}$ is\vspace*{1pt} an increasing family of
sigma-algebras such that $\mathcal{F}_t = \bigvee_{k\ge
0}\mathcal{F}^k_t$ for every $t \ge0$.

\item[(ii)] The sequence of filtrations $\mathbb{F}^k$ converges
weakly to $\mathbb{F}$.

\item[(iii)] For every $O\in\mathcal{O}$ there exists a
sequence $O^k\in\mathcal{O}^k$ such that
\[
O^k\subset O\qquad \forall k\ge1 \quad\mbox{and}\quad \mathbb{P} \bigl[\pi(O)
- \pi \bigl(O^k\bigr)\bigr]\rightarrow0 \qquad\mbox{as } k\rightarrow\infty.
\]
\end{longlist}
\end{lemma}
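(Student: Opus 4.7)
The plan is to handle the three assertions in turn, reducing each to either the uniform bound $\|B_t - A^k_t\|_\infty \le 2^{-k}$ from Lemma~\ref{useful} or to L\'evy's increasing martingale convergence theorem, and then completing (iii) by a Dynkin class argument on stochastic intervals.

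For (i), the monotonicity $\mathcal{F}^k_t \subset \mathcal{F}^{k+1}_t$ follows directly from the refinement observation made just before Lemma~\ref{useful}: since $(T^k_n)_{n\ge 0}$ is a subsequence of $(T^{k+1}_n)_{n\ge 0}$, one recovers $A^k$ from $A^{k+1}$ by sub-sampling, so $A^k$ is $\mathbb{F}^{k+1}$-adapted, and trivially $A^k$ is $\mathbb{F}$-adapted. For $\mathcal{F}_t \subset \bigvee_k \mathcal{F}^k_t$, I would invoke~(\ref{approximationbm}) to conclude that $A^k_s \to B_s$ uniformly in $s\le t$, so every coordinate $B_s$ is $\bigvee_k \mathcal{F}^k_t$-measurable; since these generate $\mathcal{F}_t$ after $\mathbb{P}$-completion, the inclusion follows. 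Assertion (ii) then comes almost for free: in the Coquet-M\'emin-S\l{}omi\'nski sense, weak convergence of filtrations amounts to $\mathbb{E}[\mathbf{1}_A|\mathcal{F}^k_t]\to \mathbb{E}[\mathbf{1}_A|\mathcal{F}_t]$ in probability for $A\in \mathcal{F}$ and every $t$ in a co-countable set, and this is immediate from (i) via L\'evy's upward theorem for the increasing filtration $\mathbb{F}^k\uparrow \mathbb{F}$.

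For (iii), I would argue by a Dynkin/monotone class reduction on the $\pi$-system of stochastic intervals $[[S,\infty[[$ with $\mathbb{F}$-stopping time $S$ (which generates $\mathcal{O}$). Given such an $O$, take a right-continuous version of the conditional probability $M^k_t := \mathbb{P}(S\le t | \mathcal{F}^k_t)$, available by right-continuity of $\mathbb{F}^k$ and via optional projection of $\mathbf{1}_{[[S,\infty[[}$, and set $S^k := \inf\{t\ge 0 : M^k_t = 1\}$. On $\{t<S\}$ one has $\mathbf{1}_{\{S\le t\}}=0$, forcing $M^k_t=0<1$, so $S^k \ge S$ and therefore $O^k := [[S^k,\infty[[$ lies in $\mathcal{O}^k$ with $O^k \subset O$; L\'evy's theorem together with (i) forces $M^k_{S+\epsilon}\to 1$ on $\{S<\infty\}$, whence $\mathbb{P}(S^k<\infty)\to \mathbb{P}(S<\infty)$ and $\mathbb{P}[\pi(O)\setminus \pi(O^k)]\to 0$. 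The class $\mathcal{L}$ of sets enjoying both an inner approximation as above \emph{and} an analogous outer approximation $\tilde O^k \supset O$ is closed under complements by swapping roles, under finite intersections via the elementary containment $\pi(O_1\cap O_2)\setminus \pi(O^k_1\cap O^k_2)\subset \pi(O_1\setminus O^k_1)\cup \pi(O_2\setminus O^k_2)$, and under countable increasing unions by a diagonal choice using that $\mathcal{O}^k$ is increasing in $k$; hence $\mathcal{L} = \mathcal{O}$.

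The \emph{main obstacle} will be securing the \emph{literal} inclusion $O^k\subset O$: a general version of the conditional expectation only ensures $\{M^k=1\}\subset \{S\le \cdot\}$ modulo evanescence. To fix this, I would exploit the jumping structure of $\mathbb{F}^k$ spelled out in~(\ref{discrete}) — namely, on $\{T^k_n\le t<T^k_{n+1}\}$ the sigma-algebra $\mathcal{F}^k_t$ coincides with $\mathcal{G}^k_n$, whose atoms are explicit Borel functions of $(T^k_1,\ldots,T^k_n,\sigma^k_1,\ldots,\sigma^k_n)$ — to select a version of $M^k$ piecewise on these atoms for which $\{M^k=1\}$ is pointwise contained in $\{S\le \cdot\}$, and then to propagate this literal inclusion through each step of the Dynkin argument (complementation, intersection, diagonal union). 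The outer approximation is built dually from $\{M^k=0\}$ after the analogous atomic selection.
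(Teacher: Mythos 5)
Parts (i) and (ii) of your proposal are sound and essentially the paper's own route: monotonicity of $\mathbb{F}^k$ from the refinement of the stopping-time grids, the identity $\mathcal{F}_t=\bigvee_k\mathcal{F}^k_t$ from the uniform bound~(\ref{approximationbm}), and (ii) by reduction to the cited result of Coquet--M\'emin--S\l omi\'nski~\cite{coquet1} (be aware that L\'evy's upward theorem only gives convergence of $\mathbb{E}[1\!\!1_A|\mathcal{F}^k_t]$ at each fixed $t$, whereas weak convergence of filtrations requires Skorokhod convergence of the martingale paths; that reduction is exactly what their Proposition~4 supplies, and both you and the paper lean on it).

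The genuine gap is in (iii), and it is twofold. First, the base case fails. With $M^k$ the $\mathbb{F}^k$-optional projection of $1\!\!1_{[[S,\infty[[}$, the set $\{M^k=1\}$ is, up to evanescence, the \emph{largest} $\mathcal{O}^k$-set contained in $[[S,\infty[[$, and it can be evanescent for every $k$ even though $\mathbb{P}(S<\infty)>0$. Take $S=t_0$ on $\{B_{t_0}\in K\}$ and $S=\infty$ otherwise, where $K$ is compact with empty interior and positive Lebesgue measure: given $\mathcal{F}^k_t$ the conditional law of $B_{t_0}$ has a density which is strictly positive on a nondegenerate open interval, and no such interval is contained in $K$ modulo Lebesgue-null sets, so $\mathbb{P}(S\le t\mid\mathcal{F}^k_t)<1$ a.s.\ for every $t$ and $k$; hence your $S^k\equiv\infty$, $O^k=\emptyset$, and $\mathbb{P}[\pi(O)-\pi(O^k)]=\mathbb{P}(B_{t_0}\in K)$ for all $k$. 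The inference ``$M^k_{S+\epsilon}\to1$, whence $\mathbb{P}(S^k<\infty)\to\mathbb{P}(S<\infty)$'' conflates convergence to $1$ with attainment of $1$. Second, even with good base cases the Dynkin propagation does not close: the quantity $\mathbb{P}[\pi(O)\setminus\pi(O^k)]$ is not stable under complements or intersections because $\pi$ does not commute with those operations. Your own intersection bound has $\pi(O_i\setminus O_i^k)$, the projection of the set difference, on the right-hand side; that is a strictly stronger quantity, and for your base case it equals $\mathbb{P}[S<S^k]$, which does not tend to $0$ (an $\mathbb{F}^k$-stopping time dominating an $\mathbb{F}$-stopping time is generically strictly larger). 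The same defect kills the complementation step. This is precisely why the paper does not run a monotone-class argument on optional sets at all: it first establishes the two-sided approximation~(\ref{e3})--(\ref{e4}) for subsets of $\Omega$ in $\mathcal{F}_t$, then invokes the optional section theorem (\cite{yan}, Th.~4.5) to replace $\pi(O)$, up to $\varepsilon$, by $\pi(D)$ with $D\subset O$ a countable decreasing intersection of elementary rectangles $X_i\times[s_i,t_i)$, approximates each rectangle from inside by $\Lambda^k_i\times[s_i,t_i-a_i]$ with \emph{compact} time-sections so that $\pi$ commutes with the countable intersection. Repairing your argument requires replacing the Dynkin scheme by this section-theorem reduction (or an equivalent capacitability argument); as it stands, the proposal does not prove (iii).
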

\begin{pf}
It is straightforward to check that $\mathcal{F}^k_t\subset\mathcal
{F}_t^{k+1}$ for every $k$ and $t \ge0$.
Moreover, each cylinder set of the form $\{b_1 < B_t \le b_2\}$ can be
approximated by
%
\begin{eqnarray}
\label{f1}
&&
\bigl\{b_1 +2^{-k}< A^k_t
\le b_2 - 2^{-k} \bigr\} \nonumber\\
&&\qquad
\subset\{b_1 <
B_t \le b_2 \} \\
&&\qquad\subset\bigl\{b_1 -
2^{-k}< A^k_t \le b_2 +
2^{-k}\bigr\} \qquad\mbox{ a.s.}\nonumber
\end{eqnarray}
for $k$ large enough, thus proving part (i). To prove part (ii) we only
need to show that for each $B \in\mathcal{F}_T$ the sequence of martingales
$\mathbb{E}[\one_B|\mathcal{F}^k_{\cdot}]$ converges in probability to
$\mathbb{E}[\one_B|\mathcal{F}_{\cdot}]$ on the space of c\`adl\`ag
functions equipped with the usual Skorohod topology. But this is a
simple application of~\cite{coquet1}, Proposition~4. Now let us fix an
arbitrary $0 < t \le T$. From (\ref{f1}) we know that for any cylinder
set restricted on $[0,t]$ we may find two sequences $(D^k_{i})_{k\ge
1}$, $i=1,2$, such that
%
\begin{equation}
\label{e3} D^k_1 \subset D \subset D^k_2\vadjust{\goodbreak}
\end{equation}
for $k$ large enough, where $D^m_1\subset D^{m+1}_1$ and $D^m_2\supset
D^{m+1}_2; m\ge1$. From (\ref{approximationbm}) it follows that
%
\begin{equation}
\label{e4} \max\bigl\{\mathbb{P}\bigl[D - D^k_1\bigr];
\mathbb{P}\bigl[D^k_2 - D\bigr]\bigr\}\rightarrow0
\qquad\mbox{as } k\rightarrow\infty.
\end{equation}
In fact, by a standard monotone class argument one can easily show that
any set in $\mathcal{F}_t$ satisfies the
above property. Now recall that $\mathcal{O} = \sigma(\mathcal{C})$ where
\[
\mathcal{C} = \bigl\{ E \times\{0\}\dvtx E \in\mathcal{F}_0 \bigr\}
\cup \bigl\{ [s,t) \times E\dvtx s< t; s,t\in \mathbb{Q}_+\cap[0,T], E \in
\mathcal{F}_s \bigr\}.
\]
From (\ref{e3}) and (\ref{e4}) it follows that for each $\Lambda\in
\mathcal{C}$, there exist sequences $O^k_i$
$i=1,2$ so that $O^k_1\subset\Lambda\subset O^k_2$ with $k$ large
enough and
\[
\max\bigl\{\mathbb{P} \bigl[\pi(\Lambda) - \pi\bigl(O^k\bigr)\bigr];\mathbb{P}
\bigl[\pi\bigl(O^k\bigr) - \pi (\Lambda) \bigr]\bigr\}\rightarrow0
\qquad\mbox{as } k\rightarrow\infty.
\]

In order to recover any optional set in $\mathcal{O}$, we shall apply a
routine argument based on the section theorem (see, e.g.,~\cite{yan},
Theorem 4.5) so we omit the details. The proof of the lemma is complete.
\end{pf}


In the remainder of this paper, we will adopt the following
terminology.
%
\begin{definition}
We say that a real-valued process $X$ is a \textit{Wiener functional}
if it is optional w.r.t. the Brownian filtration $\mathbb{F}$ and
$\mathbb{E}|X_{T^k_n}| < \infty$ for every $k,n\ge1$.
\end{definition}


We now embed a given Wiener functional $X$ into a sequence of
$\mathbb{F}^k$ quasi-left continuous bounded variation processes
as
%
\begin{equation}
\label{projection} \delta^k X_t:= X_0 + \sum
_{n=1}^\infty\mathbb{E}\bigl[X_{T^k_n}|
\mathcal {G}^k_n\bigr] \one_{\{ T_{n}^{k} \leq t <
T^k_{n+1} \}},\qquad 0\le t \le T.
\end{equation}

\begin{remark}\label{ucpconv}
The convergence $\delta^kX \rightarrow X$ is just a matter of path
regularity. In fact, as a consequence of~\cite{coquet1},
Theorem 1, we know that if a given Wiener functional $X$ has
continuous paths, then
\[
\mathbb{E}\bigl[X_\cdot|\mathcal{F}^k_\cdot\bigr]
\rightarrow X_{\cdot},\qquad \delta^kX_\cdot\rightarrow
X_\cdot
\]
uniformly in probability as $k\rightarrow\infty$.
\end{remark}
%
\begin{remark}\label{martingaleprojectionremark}
The usual optional stopping theorem implies that any
$\mathbb{F}$-martingale $M$ with $M_0=0$ a.s. admits the
representation
%
\begin{equation}
\label{martingaleprojection} \delta^kM_t =\mathbb{E}
\bigl[M_T|\mathcal{F}^k_t\bigr],\qquad 0\le t \le T.
\end{equation}
In particular, $A^k = \delta^kB$.


\end{remark}

Next, our goal is to establish an explicit decomposition for
the embedded semimartingale skeleton $(\delta^kX)_{k\ge1}$ in terms of
a discrete-type derivative.

\subsection{The approximate decomposition}\label{sec2.1}
In this section, we obtain an explicit Doob--Meyer decomposition for
$\delta^kX$. At first, one should notice that $\{\delta^k X_t\dvtx 0 \le t
\le T\}$ is an
$\mathbb{F}^k$-adapted process with locally integrable variation\vadjust{\goodbreak}
for each $k\ge1$. Moreover, there exists a unique
$\mathbb{F}^k$-predictable process $N^{k,X}$ with locally
integrable variation such that
%
\begin{equation}
\label{absrep} \delta^k X_t - X_0 -
N^{k,X}_t =:M^{k,X}_t,\qquad 0\le t\le T,
\end{equation}
is an $\mathbb{F}^k$-local martingale. The process
$N^{k,X}$ is the $\mathbb{F}^k$-dual predictable projection of $\delta^kX_t- X_0$
which can be taken with continuous paths because $\mathbb{F}^k$ is
quasi left-continuous.

Next we aim at characterizing the elements of the decomposition (\ref
{absrep}). One should notice that since $\mathbb{F}^k$ is not a
completely continuous filtration [see~(\ref{sigmakn})], then $A^k$
cannot have a strong predictable representation.
%
\begin{remark}
Since $A^k$ is a quasi left-continuous martingale and a step process,
then it has the so-called optional representation (see, e.g.,
\cite{yan}, Theorem~13.19 and Example 13.9). That is, every
$\mathbb{F}^k$-local martingale starting from zero is represented by an
optional integral w.r.t. $A^k$.
\end{remark}

In the remainder of this paper, we make use of the optional stochastic
integration w.r.t. $A^k$. We refer the reader to
\cite{Dellacherie1,yan} for all details about optional integrals used
in this paper. We just want to mention here that since the filtration
$\mathbb{F}^k$ is quasi left-continuous, then the related optional
integrals admit the usual operational properties of stochastic
integrals with predictable integrands\vspace*{1pt} (see, e.g.,
\cite{Dellacherie1}, Remark 35, page 346). In this work, we denote by
$\oint_0^t Y_s\,dA^k_s$ the optional integral of an
$\mathbb{F}^k$-optional process $Y$.

We now introduce a process which will play a key role in this work. If
$\delta^kX$ is the
$\mathbb{F}^k$-projection of a Wiener functional $X$, then we define
the following $\mathbb{F}^k$-optional
process
%
\begin{equation}
\label{stochasticderivative} \mathcal{D}\delta^kX:=\sum
_{n=1}^{\infty}\frac{\delta^k X_{T^k_n} -
\delta^kX_{T^k_{n-1}}}{ B_{T^k_n} -
B_{T^k_{n-1}}}\one_{\lbra  T^k_n, T^k_n\rbra }.
\end{equation}
If
%
\begin{equation}
\label{nec} \mathbb{E}\sum_{n=1}^m\bigl|\Delta
\delta^kX_{T^k_n}\bigr|^2 < \infty \qquad\forall m,k
\ge1,
\end{equation}
then
\[
\biggl[\int_0^\cdot\mathcal{D}^2_{s}
\delta^kX\,d\bigl[A^k,A^k\bigr]_s
\biggr]^{1/2} = \Biggl[\sum_{n=1}^{\infty}
\bigl(\delta^kX_{T^k_n} - \delta^kX_{T^k_{n-1}}
\bigr)^2 \one_{\{T^k_n
\leq\cdot\}} \Biggr]^{1/2}
\]
is a locally integrable increasing process for every
$k\ge1$. In this case, there exists a unique $\mathbb{F}^k$-local
martingale $M$ such that for every bounded $\mathbb{F}^k$-martingale
$V$, the process $[M,V] -
\int_0^{\cdot}\mathcal{D}\delta^kX\,d[V,A^k]$ is an
$\mathbb{F}^k$-local martingale and
\[
M_t = \int_0^t
\mathcal{D}_{s}\delta^kX \,dA^k_s -
\biggl[\int_0^{\cdot}\mathcal{D}_{s}
\delta^kX\,dA^k_s \biggr]^{p,k}_t
= \oint_0^t\mathcal{D}_{s}
\delta^kX\,dA^k_s,
\]
where\vspace*{1pt} $\int_0^t\mathcal{D}_{s}\delta^kX \,dA^k_s$ is
interpreted in the Lebesgue--Stieltjes sense. By observing that
$\sum_{0 \le s\le t}\Delta \delta^kX_s=\sum_{0 \le s\le t}\Delta
M^{k,X}_s$ and\vspace*{1pt} the fact that $\delta^kX$ is quasi
left-continuous, we actually have the following optional representation
for the martingale part in the decomposition (\ref{absrep}):
\[
M^{k,X}_t = \oint_0^t
\mathcal{D}_{s}\delta^kX\,dA^k_s;\qquad 0
\le t \le T.
\]
Of course, $\mathcal{D}\delta^kX$ is the unique $\mathbb{F}^k$-optional
process which represents the
martingale $M^{k,X}$ as an optional stochastic integral with respect to
the martingale $A^k$. Let us characterize the remainder term in the
decomposition (\ref{absrep}).
%
\begin{lemma}\label{explicit}
The $\mathbb{F}^k$-dual predictable projection of $\delta^kX - X_0$ is
given by the continuous process
\[
\int_0^tU^{k,X}_sd\bigl
\langle A^k, A^k \bigr\rangle_s,\qquad 0\le t \le
T,
\]
where
$U^{k,X}:=\mathbb{E}_{[A^k]} [\mathcal{D}\delta^kX/\Delta
A^k | \mathcal{P}^k  ]$. Here
$\mathbb{E}_{[A^k]} [\cdot| \mathcal{P}^k  ]$ denotes the
conditional expectation w.r.t. $\mathcal{P}^k$ under the
Dol\'eans measure generated by $[A^k,A^k]$. Moreover,
%
\begin{equation}
\label{uk} U^{k,X}_t= 0 \one_{\{T^k_0=t\}} +
\frac{1}{2^{-2k}}\sum_{n=1}^\infty \mathbb{E}
\bigl[X_t - X_{T^k_{n-1}}| \mathcal{G}^k_{n-1};
T^k_n=t\bigr] \one_{\{ T_{n-1}^k <t \le T_{n}^k\}}.\hspace*{-35pt}
\end{equation}
\end{lemma}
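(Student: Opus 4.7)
The plan is to identify $\delta^kX-X_0$ with a Lebesgue--Stieltjes integral against $[A^k,A^k]$, and then use the standard description of the dual predictable projection of a pure-jump process via conditional expectation under the Dol\'eans measure. More concretely, I would first observe that on each jump $T^k_n$ one has
\[
\Delta \delta^kX_{T^k_n}
= \mathcal{D}\delta^kX_{T^k_n}\,\Delta A^k_{T^k_n}
= \frac{\mathcal{D}\delta^kX_{T^k_n}}{\Delta A^k_{T^k_n}}\,(\Delta A^k_{T^k_n})^{2}
= \frac{\mathcal{D}\delta^kX_{T^k_n}}{\Delta A^k_{T^k_n}}\,\Delta[A^k,A^k]_{T^k_n},
\]
since $(\Delta A^k_{T^k_n})^2 = 2^{-2k} = \Delta[A^k,A^k]_{T^k_n}$. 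Summing over the jumps yields
\[
\delta^kX_t - X_0 \;=\; \int_0^t \frac{\mathcal{D}\delta^kX_s}{\Delta A^k_s}\,d[A^k,A^k]_s
\]
as a Lebesgue--Stieltjes integral, with a locally integrable integrand by~(\ref{nec}).

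Next I would take the dual $\mathbb{F}^k$-predictable projection on both sides. By definition of the conditional expectation under the Dol\'eans measure generated by $[A^k,A^k]$, for every bounded $\mathcal{P}^k$-measurable $H$,
\[
\mathbb{E}\!\int_0^\cdot H_s\,\frac{\mathcal{D}\delta^kX_s}{\Delta A^k_s}\,d[A^k,A^k]_s
= \mathbb{E}\!\int_0^\cdot H_s\,U^{k,X}_s\,d[A^k,A^k]_s
= \mathbb{E}\!\int_0^\cdot H_s\,U^{k,X}_s\,d\langle A^k,A^k\rangle_s,
\]
where the second equality uses that $\langle A^k,A^k\rangle$ is precisely the dual predictable projection of $[A^k,A^k]$ (Lemma~\ref{anglebracket}) and that $U^{k,X}$ is $\mathcal{P}^k$-measurable. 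By the characterisation of the dual predictable projection this forces $[\delta^kX - X_0]^{p,k}_t = \int_0^t U^{k,X}_s\,d\langle A^k,A^k\rangle_s$, proving the first assertion.

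It remains to derive the explicit formula~(\ref{uk}). Here I would use that $\mathbb{F}^k$ is the jumping filtration associated with the totally inaccessible stopping times $(T^k_n)_{n\ge1}$ (Remark~\ref{jump}). The standard Jacod-type description of predictable processes on such a filtration says that any $\mathcal{P}^k$-measurable process $V$ can be written on $\{T^k_{n-1}<t\le T^k_n\}$ as $V_t = v_n(t,\omega)$ with $v_n$ $\mathcal{G}^k_{n-1}\otimes\mathcal{B}(\mathbb{R}_+)$-measurable, and the conditional expectation under the Dol\'eans measure at $(t,\omega)$ equals the regular conditional expectation given $\mathcal{G}^k_{n-1}$ and $\{T^k_n=t\}$. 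Applied to $\mathcal{D}\delta^kX/\Delta A^k$ at $T^k_n$, which by~(\ref{projection}) equals
\[
\frac{\mathbb{E}[X_{T^k_n}\mid \mathcal{G}^k_n] - \mathbb{E}[X_{T^k_{n-1}}\mid \mathcal{G}^k_{n-1}]}{2^{-2k}},
\]
the tower property (absorbing the conditional expectation over $\sigma^k_n$ that is still free given $\mathcal{G}^k_{n-1}$ and $T^k_n=t$) collapses $\mathbb{E}[\mathbb{E}[X_t\mid \mathcal{G}^k_n]\mid \mathcal{G}^k_{n-1},T^k_n=t]$ to $\mathbb{E}[X_t\mid \mathcal{G}^k_{n-1},T^k_n=t]$ and gives exactly~(\ref{uk}), with the trivial $T^k_0=t=0$ contribution vanishing.

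The main obstacle is the careful invocation of the jumping-filtration conditional expectation (the second step above), which needs a clean regular-conditional-distribution argument to pass from conditioning on $\mathcal{G}^k_n$ to conditioning on $\mathcal{G}^k_{n-1}$ together with the value of $T^k_n$; once that is done, the dual-projection identity follows automatically from Lemma~\ref{anglebracket} and the optional/predictable interchange under the Dol\'eans measure of $[A^k,A^k]$.
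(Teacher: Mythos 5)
Your proposal is correct and follows essentially the same route as the paper: rewrite $\delta^kX-X_0$ as the Stieltjes integral of $\mathcal{D}\delta^kX/\Delta A^k$ against $[A^k,A^k]$, pass to $\langle A^k,A^k\rangle$ via the Dol\'eans-measure conditional expectation and predictability of $U^{k,X}$, and then identify $U^{k,X}(t)$ on $]]T^k_{n-1},T^k_n]]$ through the Br\'emaud/jumping-filtration description of predictable processes together with the tower property. No substantive differences from the paper's argument.
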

\begin{pf}
The fact that $U^{k,X}=\mathbb{E}_{[A^k]} [\mathcal{D}\delta^kX/\Delta
A^k | \mathcal{P}^k  ]$ is obvious. Let us now characterize
$U^{k,X}$. For this, let us consider the sequence of sigma-algebras
$\mathcal{G}^k_{n-}:= \mathcal{G}^k_{n-1}\vee\sigma(T^k_n)$, $n\ge1$.
We recall that for every $C \in\mathcal{G}^k_{n-}$, there exists a
predictable process $H$ such that $H_{T_{n}^k} =
\one_{C}$ and it is null outside the stochastic interval $\rbra T_{n-1}^k,
T_{n}^k\rbra $ (see~\cite{Bremaud}, Theorem 31,
page 307). Then,
\[
\mathbb{E} \bigl[ \one_{C} \Delta\delta^kX_{T_{n}^k}
\one_{\{
T_{n}^k \leq T \} } \bigr] = \mathbb{E} \bigl[ \one_{C}
U^{k,X}_{T_{n}^k} 2^{-2k} \one_{\{ T_{n}^k \leq T \} } \bigr].
\]
Since $C$ is arbitrary and $U^{k,X}$ a predictable process, it
follows that
\[
\mathbb{E} \bigl[ \Delta\delta^kX_{T_{n}^k}
\one_{\{ T_{n}^k
\leq T \} } \mid\mathcal{G}^k_{n-} \bigr] =
U^{k,X}_{T_{n}^k} 2^{-2k} \one_{\{ T_{n}^k \leq T \} }.
\]
Then, one version of the conditional expectation can be written as (\ref
{uk}). The proof of the lemma is complete.
\end{pf}

The next result describes an explicit expression for the predictable
bracket of $A^k$ in terms of the density $f^k$ and the distribution
function $F^k$ of $T^k_1$ (see, e.g.,~\cite{burq} for the corresponding
formulas).
%
\begin{lemma}\label{anglebracket}
The predictable bracket of $A^k$ is an absolutely continuous process
where the Radon--Nikodym derivative process is given by
%
\begin{equation}
\label{intensity} h^k_t = 2^{-2k}\sum
_{n=1}^{\infty} \lambda^k_{t-T_{n-1}^k}
\one_{ \{
T_{n-1}^k < t \leq T_{n}^k\}},\qquad 0\le t\le T,
\end{equation}
where $\lambda^k_t = \frac{f^k_t}{1-F^k_t}$, $0\le t\le T$.
\end{lemma}
\begin{pf}
Since $A^k$ is a quasi left-continuous point process where the
difference of the jumping times $\{(T_{n}^k - T_{n-1}^k); n\ge1\}$ is
a sequence of i.i.d. absolutely continuous random variables, then it is
well known that $\langle A^k,A^k\rangle$ has absolutely continuous
paths. A straightforward but lengthy calculation together with
\cite{lipster}, Theorem 18.2, yields (\ref{intensity}).
\end{pf}

Summing up all previous results of this section, we then arrive at the
following representation.
%
\begin{proposition}\label{appdecomp}
If $X$ is a Wiener functional satisfying assumption (\ref{nec}), then
the $\mathbb{F}^k$-special semimartingale decomposition
$(M^{k,X},N^{k,X})$ in (\ref{absrep}) is actually given by
%
\begin{equation}
\label{explicitdec} \delta^kX_t = X_0 +
\oint_0^t\mathcal{D}_s\delta^kX
\,dA^k_s + \int_0^t
U^{k,X}_sh^k_s\,ds,\qquad 0\le t\le T.\hspace*{-20pt}
\end{equation}
\end{proposition}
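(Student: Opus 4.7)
The statement is essentially a synthesis of the results already established in Lemmas~\ref{abstractrep}, \ref{anglebracket}, and \ref{explicit}, together with the computation of the optional integral representation of the martingale part carried out in the paragraphs preceding the proposition. So my plan is to verify that under assumption~(\ref{nec}) we may cleanly assemble these pieces into the single formula~(\ref{explicitdec}).

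First, I would invoke Lemma~\ref{abstractrep} to get the existence and uniqueness of the canonical decomposition $\delta^k X = X_0 + M^{k,X} + N^{k,X}$, with $M^{k,X}$ an $\mathbb{F}^k$-local martingale and $N^{k,X}$ a continuous $\mathbb{F}^k$-predictable process with locally integrable variation. The task then splits into identifying each of these two terms with the corresponding integral in~(\ref{explicitdec}).

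For the martingale part, I would use assumption~(\ref{nec}) to guarantee that the process $\bigl[\int_0^{\cdot}(\mathcal{D}_s\delta^kX)^2 d[A^k,A^k]_s\bigr]^{1/2}$ is locally integrable, which is exactly the integrability condition ensuring that the optional integral $\oint_0^{\cdot}\mathcal{D}_s\delta^kX\, dA^k_s$ is well defined as an $\mathbb{F}^k$-local martingale. Writing $M^{k,X}$ as its compensated sum of jumps $Y^k - [Y^k]^{p,k}$, as done in the text right after Lemma~\ref{explicit}, and recognizing $Y^k_t = \sum_{n\ge 1}(\Delta\delta^kX_{T^k_n}/\Delta A^k_{T^k_n})\Delta A^k_{T^k_n}1\!\!1_{\{T^k_n\le t\}} = \int_0^t \mathcal{D}_s\delta^kX\,dA^k_s$ in the Lebesgue--Stieltjes sense, I identify $M^{k,X} = \oint_0^{\cdot}\mathcal{D}_s\delta^kX\,dA^k_s$. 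Uniqueness of the optional integral representation of $\mathbb{F}^k$-local martingales (which follows from the quasi-left continuity of $\mathbb{F}^k$ and the optional representation property of $A^k$, cf.\ Remark~\ref{jump}) justifies this identification.

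For the predictable part, I would directly apply Lemma~\ref{explicit} to write $N^{k,X}_t = \int_0^t U^{k,X}_s d\langle A^k, A^k\rangle_s$ with $U^{k,X}$ given by~(\ref{uk}), and then substitute the deterministic expression $\langle A^k, A^k\rangle_t = \int_0^t h^k(s)\,ds$ from Lemma~\ref{anglebracket} to obtain $N^{k,X}_t = \int_0^t U^{k,X}_s h^k(s)\,ds$. Adding the two pieces together yields~(\ref{explicitdec}).

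The only subtle point that deserves a careful remark is making sure that the optional integral $\oint_0^{\cdot}\mathcal{D}_s\delta^kX\, dA^k_s$ coincides with the compensated sum of jumps of $\delta^kX$; once assumption~(\ref{nec}) is in force this is routine, since $A^k$ is a purely discontinuous square-integrable martingale with totally inaccessible jumps and $\mathbb{F}^k$-local martingales are purely discontinuous of bounded variation (Remark~\ref{jump}), so optional integrals reduce to pathwise Lebesgue--Stieltjes sums minus their dual predictable projections. No truly new estimates are needed — the entire argument is a bookkeeping step assembling the structural results that have already been proved.
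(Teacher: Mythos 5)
Your proposal is correct and follows essentially the same route as the paper, which explicitly presents Proposition~\ref{appdecomp} as a summary obtained by ``summing up all previous results of this section'': Lemma~\ref{abstractrep} for the canonical decomposition, the compensated-sum-of-jumps identification $M^{k,X}=\oint\mathcal{D}_s\delta^kX\,dA^k_s$ carried out in the text, Lemma~\ref{explicit} for the dual predictable projection, and Lemma~\ref{anglebracket} to replace $d\langle A^k,A^k\rangle_s$ by $h^k(s)\,ds$. No discrepancy with the paper's argument.
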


\section{Weak decomposition of Wiener functionals}\label{sec3}\label{convergencemain}
In this section we are interested in providing readable
conditions on a given Wiener functional $X$ in such way that
\[
X=\lim_{k\rightarrow\infty}\delta^kX;\qquad
M^X=\lim_{k\rightarrow\infty}M^{k,X};\qquad
N^X=\lim_{k\rightarrow\infty}N^{k,X}
\]
in a suitable topology. Under such assumptions, we are able
to decompose $X$ into a unique orthogonal decomposition which is
similar in nature to weak Dirichlet processes (see, e.g.,
\cite{coviello} and other references therein)
\[
X_t = X_0 + M^X_t +
N^X_t,
\]
where $M^X$ is a martingale and $N^X$ is an adapted
process whose specific type of \textit{covariation} (see
Definition~\ref{covariation}) w.r.t. Brownian motion is
null.
\subsection{Weak convergence and primary decomposition}\label{sec3.1}\label{converg}

In this section we investigate the convergence of our preliminary
decomposition (\ref{explicitdec}) given in
terms of the approximation scheme $(A^k,\mathbb{F}^k)$. By carefully
choosing a suitable topology on the space
of processes, our strategy will be fully based on the information given
by the quadratic variation of the
martingale component in (\ref{explicitdec}).

Let $\mathrm{B}^p (\mathbb{F})$ be the set of all $\mathbb{F}$-optional
processes and which
are $1\le p< \infty$ B\"{o}chner integrable in the sense that
%
\begin{equation}
\label{bochner} \|X\|^p_{\mathrm{B}^p} = \mathbb{E}\bigl|X^*_T\bigr|^p
< \infty,
\end{equation}
where $X^*_T:={\sup_{0\le t \le T}}|X_t|$. Of course,
$\mathrm{B}^p(\mathbb{F})$ endowed with the norm \mbox{$\|\cdot\|_{\mathrm
{B}^p}$} is a Banach space, where the
subspace $\mathrm{H}^p(\mathbb{F})$ of the $\mathbb{F}$-martingales
starting from zero is closed. Recall that the topological dual $\mathrm
{M}^q(\mathbb{F})$ of $\mathrm{B}^p(\mathbb{F})$ is the space of
processes $A = (A^{\mathit{pr}},
A^{\mathit{pd}})$ such that:

\begin{longlist}[(ii)]
\item[(i)]
$A^{\mathit{pr}}$ and $A^{\mathit{pd}}$ are right-continuous of bounded variation
such that $A^{\mathit{pr}}$ is $\mathbb{F}$-predictable with $A^{\mathit{pr}}_0=0$ and
$A^{\mathit{pd}}$ is $\mathbb{F}$-optional and purely discontinuous.

\item[(ii)] $\operatorname{Var}(A^{\mathit{pd}}) +
\operatorname{Var}(A^{\mathit{pr}})\in L^q; \frac{1}{p}+\frac{1}{q}=1$,
\end{longlist}
where $\operatorname{Var}(\cdot)$ denotes the total variation of a bounded variation
process on the interval $[0,T]$. The space $\mathrm{M}^{q}(\mathbb{F})$
has the strong topology given by
\[
\|A\|_{M^q}: = \bigl\|{\operatorname{Var}}\bigl(A^{\mathit{pr}}\bigr)\bigr\|_{L^q}
+ \bigl\|{\operatorname{Var}}\bigl(A^{\mathit{pd}}\bigr)\bigr\|_{L^q}.
\]
The duality pair is given by
\[
(A,X):= \mathbb{E}\int_{0}^T X_{s-}
\,dA^{\mathit{pr}}_s + \mathbb{E}\int_0^T
X_s\,dA^{\mathit{pd}}_s;\qquad X \in\mathrm{B}^p(
\mathbb{F}),
\]
where the following estimate holds:
\[
\bigl|(A, X)\bigr| \le\|A\|_{M^q} \|X\|_{\mathrm{B}^p}
\]
for every $A \in\mathrm{M}^q(\mathbb{F})$, $X \in\mathrm
{B}^p(\mathbb{F})$ such that $1\le p < \infty$ and $\frac{1}{p} + \frac
{1}{q}=1$. We denote $\sigma(\mathrm{B}^p,\mathrm{M}^q)$ the weak
topology of $\mathrm{B}^p(\mathbb{F})$.

In this work, the indexes $p=1,2$ will play a key role in our
convergence results, in particular, the subspaces $\mathrm{H}^p$ for
$p=1,2$. See the works~\cite{Dellacherie1,Dellacherie3,meyer1} for detailed discussions on the weak topology of $\mathrm
{B}^p(\mathbb{F})$ restricted to the subspace of martingales $\mathrm
{H}^p(\mathbb{F})$.

In this article it will be also useful to work with the following
notion of convergence. Actually, one can show that the set $\Lambda^{\infty}$ of the $\mathbb{F}$-optional bounded variation processes of
the form
\[
C = g\one_{\{S \leq\cdot\}};\qquad g \in L^{\infty}(\mathcal{F}_S),
S\mbox{ is an } \mathbb{F}\mbox{-stopping time (bounded by T)},
\]
fulfills the Banach space $\mathrm{B}^1(\mathbb{F})$ in the
sense that
%
\begin{equation}
\label{weakweak} \|X\|_{\mathrm{B}^1}=\sup \bigl\{ \bigl|(X, C)\bigr|; C\in
\Lambda^{\infty}, \|C\|_{\mathrm{M}^{\infty}}\le1 \bigr\}.
\end{equation}

Relation (\ref{weakweak}) is given in~\cite{Dellacherie3}, Lemma 1, and
therefore we may also endow
$\mathrm{B}^1(\mathbb{F})$ with the $\sigma(\mathrm{B}^1, \Lambda^{\infty})$-topology induced by the family of
seminorms
\[
X \mapsto\bigl|(X, C)\bigr|;\qquad C\in\Lambda^{\infty}.
\]
%
\begin{remark}
Obviously, $\sigma(\mathrm{B}^1, \Lambda^{\infty})$ is weaker then
$\sigma(\mathrm{B}^1,\mathrm{M}^\infty)$. However,
relation (\ref{weakweak}) says that $\Lambda^{\infty}$ is a norming
subset of $\mathrm{M}^{\infty}$ and therefore
$\Lambda^{\infty}$ is $w^*$-dense in $\mathrm{M}^\infty$.
\end{remark}
%
\begin{remark}\label{moko}
A result due to Mokobodzki~\cite{meyer1} states that if $X^n$
is a sequence of optional processes such that $\sup_{0\le t \le
T}|X^n_t|$ is uniformly integrable and for every $S$ stopping time the
sequence $X^n_S$ converges weakly in $L^1$ relatively to $\mathcal
{F}_S$, then there exists an optional process $X$ such that
$X^n\rightarrow X$ in $\sigma(\mathrm{B}^1,\mathrm{M}^\infty)$. As a
consequence, if $X^n\rightarrow X$ in $\sigma(\mathrm{B}^1,\Lambda^{\infty})$ and $\sup_{0\le t \le T}|X^n_t|$ is uniformly integrable,
we do have convergence in $\sigma(\mathrm{B}^1,\mathrm{M}^\infty)$
(see also Dellacherie, Meyer and Yor~\cite{Dellacherie3} for
more details).
\end{remark}

In the remainder of this paper, we shall write $\mathrm{B}^p$
($H^p$) to denote the space of B\"{o}chner integrable process
($p$-integrable martingales starting from zero) satisfying (\ref{bochner})
endowed with the Brownian filtration $\mathbb{F}$. We now introduce the
following quantity which will play a crucial role in this work.
%
\begin{definition}\label{energy}
We say that a given Wiener functional $X$ has \textit{finite
energy} along the filtration family $(\mathbb{F}^k)_{k\ge1}$ if
%
\begin{equation}
\label{p-variation} \mathcal{E}_2(X):=\sup_{k\ge1}\mathbb{E}
\sum_{n=1}^{\infty} \bigl|\Delta\delta^k
X_{T^k_n}\bigr|^2\one_{\{ T_{n}^{k} \leq T \}}< \infty.
\end{equation}
\end{definition}
%
\begin{remark}
The above definition is similar in spirit to the classical notion
of energy (e.g.,~\cite{gr,coquet2}), but with one fundamental
difference: The relevant information contained in the energy of
$X$ comes only from the sigma-algebras $\mathcal{G}^k_n$ which
reveal the information generated by the jumps of the projected
Brownian motion $A^k$ up to the stopping time $T^k_n$. Moreover,
$\mathcal{E}_2(X)=\sup_{k\ge1}\mathbb{E}[M^{k,X},M^{k,X}]_T $.
\end{remark}

It is natural to ask what happens without conditioning on the
information flow $\{\mathcal{G}^k_n;k,n\ge1\}$. The following lemma
answers this question.
%
\begin{lemma}\label{energyresult}
If $X$ is a Wiener functional, then
%
\begin{equation}
\label{energyestimate} \mathcal{E}_2(X)\le\sup_{k\ge1}
\mathbb{E} \sum_{n\ge1} (X_{T^k_n} -
X_{T^k_{n-1}})^2\one_{ \{ T^k_n\le T \}}.
\end{equation}
\end{lemma}
\begin{pf}
It is sufficient to check that $\mathbb{E}[\mathcal{H}^{k,X}_t|\mathcal
{F}^k_t] =\mathcal{D}_t\delta^kX$ on $\{T^k_n\le t < T^k_{n+1} \}$ for
each $k,n\ge1$ where
\[
\mathcal{H}^{k,X}:=
\sum_{n=1}^{\infty}
\frac
{X_{T^k_n}-X_{T^k_{n-1}}}{B_{T^k_n}-B_{T^k_{n-1}}}
{\bigl[\!\bigl[  T^k_n, T^k_{n+1}\bigr[\!\bigr[}.
\]
But this is a straightforward consequence of the strong Markov property
of the Brownian motion. A simple application of Jensen inequality and
the $\mathbb{F}^k$-optional duality establishes (\ref{energyestimate}).
\end{pf}

We are now in position to study convergence of the decomposition
given in (\ref{explicitdec}). In the sequel, we fix an element
$X\in\mathrm{B}^1$ and let $(M^{k,X},N^{k,X})$ be the associated
canonical decomposition expressed in (\ref{explicitdec}). In order to
find a candidate for the limit
of $M^{k,X}$, let us introduce the following family of $\mathbb{F}$-martingales:
%
\begin{equation}
\label{fundamentalmartingale} Z^{k,X}_t:=\mathbb{E}
\bigl[M^{k,X}_T|\mathcal{F}_t\bigr];\qquad 0\le t \le
T; k\ge1.
\end{equation}

In order to prove convergence of $M^{k,X}$ to an $\mathbb
{F}$-martingale we may use some standard compactness
arguments.
%
\begin{lemma} \label{tcb4} The sequence of random
variables \mbox{$\{ [M^{k,X},M^{k,X}]_T^{1/2}\dvtx k\ge1 \}$} is uniformly
integrable if, and only if, the sequence of stochastic process
$\{Z^{k,X}\dvtx k\ge1 \}$ is weakly relatively compact in
$\mathrm{H}^1$.
\end{lemma}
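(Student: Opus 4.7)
The plan is to chain three standard martingale equivalences together. The underlying classical fact, due to Dellacherie and Meyer (see Dellacherie-Meyer, \emph{Probabilit\'es et Potentiel B}, Ch.~VII and the related discussion in~\cite{Dellacherie1, Dellacherie3, meyer1} already cited above), states that a family $\{N^k\}$ of uniformly integrable $\mathbb{F}$-martingales is weakly relatively compact in $\text{H}^1(\mathbb{F})$ if and only if the terminal family $\{N^k_T\}$ is uniformly integrable in $L^1$, and this is in turn equivalent to uniform integrability of the maximal functions $\{(N^k)^*_T\}$ and of the square roots of the quadratic variations $\{[N^k,N^k]_T^{1/2}\}$. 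I would simply apply this result to the closed $\mathbb{F}$-martingales $Z^{k,X}$ defined in~(\ref{fundamentalmartingale}).

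The first step is to note that, by the very definition of $Z^{k,X}$, one has $Z^{k,X}_T = M^{k,X}_T$. Hence uniform integrability of $\{Z^{k,X}_T\}$ is the same as uniform integrability of $\{M^{k,X}_T\}$, and by the Dellacherie-Meyer characterization the weak relative compactness of $\{Z^{k,X}\}$ in $\text{H}^1$ is equivalent to the uniform integrability of $\{M^{k,X}_T\}$.

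The second step is to relate $\{M^{k,X}_T\}$ to $\{[M^{k,X},M^{k,X}]_T^{1/2}\}$ via the Burkholder-Davis-Gundy inequalities applied to the $\mathbb{F}^k$-local martingales $M^{k,X}$ on the deterministic horizon $[0,T]$. For any moderate convex function $\Phi$ one has $\mathbb{E}\Phi\bigl([M^{k,X},M^{k,X}]_T^{1/2}\bigr) \asymp \mathbb{E}\Phi\bigl((M^{k,X})^*_T\bigr)$ with universal constants independent of $k$, so by the de la Vall\'ee-Poussin criterion uniform integrability of $\{[M^{k,X},M^{k,X}]_T^{1/2}\}$ is equivalent to uniform integrability of $\{(M^{k,X})^*_T\}$. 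Finally, the latter is equivalent to uniform integrability of the terminal values $\{M^{k,X}_T\}$: one direction is immediate by domination, while the other follows from Doob's maximal inequality in its standard form (UI of $\{|N^k_T|\}$ implies UI of $\{(N^k)^*_T\}$ for any family of martingales). Concatenating these three equivalences yields the stated biconditional.

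I do not expect any serious obstacle: the result is essentially a bookkeeping combination of BDG/Davis and the $\text{H}^1$ weak compactness theorem. The only mild subtlety worth flagging is that the quadratic variation $[M^{k,X},M^{k,X}]$ is computed with respect to the discrete-jumping filtration $\mathbb{F}^k$ (which is justified since $M^{k,X}$ is purely discontinuous by Remark~\ref{jump}), while weak compactness is sought in $\text{H}^1(\mathbb{F})$; the bridge between the two filtrations is made exclusively through the shared terminal value $Z^{k,X}_T = M^{k,X}_T$, so no compatibility issue arises. A minor preliminary remark is that uniform integrability of $\{[M^{k,X},M^{k,X}]_T^{1/2}\}$ together with BDG already ensures $\sup_k \mathbb{E}|M^{k,X}_T| < \infty$, so each $Z^{k,X}$ is a bona fide element of $\text{H}^1(\mathbb{F})$ and the statement is not vacuous.
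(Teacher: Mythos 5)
Your overall architecture is the right one and matches the paper's: bridge the two filtrations through the common terminal value $Z^{k,X}_T=M^{k,X}_T$, use the Dellacherie--Meyer--Yor compactness criterion for $\text{H}^1$ on the $\mathbb{F}$-side, and Doob/Burkholder on the $\mathbb{F}^k$-side. However, two of the three ``standard equivalences'' you chain together are false as stated, and the intermediate condition you route through (uniform integrability of the terminal values) is strictly weaker than both endpoints of the lemma, so the chain does not close.

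First, the $\text{H}^1$ weak compactness theorem of~\cite{Dellacherie3} characterizes relative weak compactness of a set of martingales in $\text{H}^1$ by uniform integrability of the \emph{maximal functions} $\{(N^k)^*_T\}$ (equivalently, via Davis' inequality, of $\{[N^k,N^k]_T^{1/2}\}$), not of the terminal values $\{N^k_T\}$. Uniform integrability of the terminal values is the Dunford--Pettis criterion for weak compactness in $L^1$; it does not even place the family in a bounded subset of $\text{H}^1$, which is a proper subspace of the space of uniformly integrable martingales. Second, and for the same reason, the implication ``$\{|N^k_T|\}$ uniformly integrable $\Rightarrow$ $\{(N^k)^*_T\}$ uniformly integrable'' fails already for a single martingale: $N_T\in L^1$ does not give $N^*_T\in L^1$ (one needs roughly $N_T\in L\log L$), since Doob's inequality in $L^1$ only yields the weak-type $(1,1)$ estimate. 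Your two incorrect equivalences happen to ``cancel'', so the composite biconditional you are after is true, but as written your argument would also prove the false claim that uniform integrability of $\{M^{k,X}_T\}$ alone implies uniform integrability of $\{[M^{k,X},M^{k,X}]_T^{1/2}\}$. The repair is exactly what the paper does: apply de la Vall\'ee-Poussin to whichever family is assumed uniformly integrable to obtain a single moderate Young function $\phi$, and then pass through the terminal value at the level of $\phi$-moments via the chain $\mathbb{E}\,\phi\big(\sup_t|Z^{k,X}_t|\big)\le c_\phi\,\mathbb{E}\,\phi\big(|M^{k,X}_T|\big)\le c_\phi\,\mathbb{E}\,\phi\big(\sup_t|M^{k,X}_t|\big)\le C_\phi\,\mathbb{E}\,\phi\big([M^{k,X},M^{k,X}]_T^{1/2}\big)$ (and the reverse chain for the converse), with constants depending only on $\phi$; uniform integrability is then read off from the uniform bound on the $\phi$-moments, never from the bare terminal values.
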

\begin{pf}
The proof is a routine argument based on the Doob and Burk\-holder
inequalities together with~\cite{Dellacherie3}, Theorem 1, so we omit
the details.~%
\end{pf}
%
\begin{remark}\label{h2compact}
By the Doob maximal inequality one should notice that if $\mathcal
{E}_2(X)<\infty$, then $\{Z^{k,X};k\ge1 \}$ is a bounded sequence in
$\mathrm{H}^2$ which also implies that it is an $\mathrm{H}^2$-weakly
sequentially compact set.
\end{remark}
%
\begin{lemma}\label{c2}
If $S$ is an $\mathbb{F}$-stopping time, then there exists a
sequence of positive random variables $(S_ k)_{k\ge1}$ such that
$S_k$ is an $\mathbb{F}^k$-stopping time for each $k\ge1$ and
$\lim_{k\rightarrow\infty} \mathbb{P}(S_k = S)=1$. Moreover, for
any $G\in\mathcal{F}_{S}$ there exists a sequence of sets
$(G^k)_{k\ge1}$ such that $G^k\in\mathcal{F}^k_{S_k}$, $G^k
\subset G \cap\{S < \infty\}$ for every $k\ge1$, and
\[
\lim_{k\rightarrow\infty}\mathbb{P}\bigl[G\cap\{S < \infty\} - G^k\bigr]
= 0.
\]
\end{lemma}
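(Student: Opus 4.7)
The plan rests on applying Lemma~\ref{smoothfiltration}(iii) twice---once to the graph of $S$ and once to the graph of its restriction to $G$---together with the debut theorem for the right-continuous filtration $\mathbb{F}^k$. First, the graph $[[S]] = \{(t,\omega):t=S(\omega)<\infty\}$ is an $\mathbb{F}$-optional set, so Lemma~\ref{smoothfiltration}(iii) supplies $O^k \in \mathcal{O}^k$ with $O^k \subset [[S]]$ and $\mathbb{P}[\pi([[S]]) - \pi(O^k)] \to 0$. Define
\begin{equation*}
S_k(\omega) := \inf\{t \geq 0 : (t,\omega) \in O^k\},
\end{equation*}
with the convention $\inf\emptyset = \infty$. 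Since $\mathbb{F}^k$ satisfies the usual conditions, the debut theorem makes $S_k$ an $\mathbb{F}^k$-stopping time. Because $O^k \subset [[S]]$, the fiber of $O^k$ over each $\omega$ contains at most the single point $(S(\omega),\omega)$; consequently $S_k = S$ on $\pi(O^k)$, while on $\{S=\infty\}$ the set $O^k$ has empty fibers so $S_k = \infty = S$ there as well. Thus $\{S_k = S\} \supset \pi(O^k) \cup \{S=\infty\}$ and $\mathbb{P}(S_k=S) \to \mathbb{P}(S<\infty)+\mathbb{P}(S=\infty)=1$.

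For the second assertion, fix $G\in\mathcal{F}_S$. Then $S_G := S$ on $G$ and $\infty$ on $G^c$ is again an $\mathbb{F}$-stopping time, so $[[S_G]] = G \cap [[S]]$ lies in $\mathcal{O}$. A second application of Lemma~\ref{smoothfiltration}(iii) produces $\tilde O^k \in \mathcal{O}^k$ with $\tilde O^k \subset [[S_G]]$ and $\mathbb{P}[\pi([[S_G]]) - \pi(\tilde O^k)] \to 0$. Set $G^k := \pi(\tilde O^k \cap O^k)$. Since $\tilde O^k \cap O^k \subset [[S_G]]$, we immediately have $G^k \subset G \cap \{S<\infty\}$. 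For the approximation property,
\begin{equation*}
G\cap\{S<\infty\} - G^k \subset \bigl(\pi([[S_G]])-\pi(\tilde O^k)\bigr) \cup \bigl(\pi([[S]])-\pi(O^k)\bigr),
\end{equation*}
because every $\omega$ dropped from $G^k$ is either missed by $\tilde O^k$ or lies above an $\omega$ where $O^k$ is empty; both sets on the right have vanishing probability.

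The main obstacle is showing $G^k \in \mathcal{F}^k_{S_k}$ rather than merely $\mathcal{F}^k_\infty$. The key observation is that both $O^k$ and $\tilde O^k$ are carried by the thin set $[[S]]$, so each of their fibers reduces to at most the point $(S(\omega),\omega)$, whence $\pi(\tilde O^k \cap O^k)=\pi(\tilde O^k)\cap\pi(O^k)$ and the $\mathbb{F}^k$-debut $R_k := \inf\{t\ge 0:(t,\omega)\in \tilde O^k \cap O^k\}$ agrees with $S_k$ on $G^k$ and equals $\infty$ elsewhere. A direct fiber-wise check then gives
\begin{equation*}
G^k \cap \{S_k \le t\} = \pi\bigl((\tilde O^k \cap O^k) \cap ([0,t]\times\Omega)\bigr) = \{R_k \le t\} \in \mathcal{F}^k_t
\end{equation*}
for every $t\ge 0$, so $G^k\in\mathcal{F}^k_{S_k}$. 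This is what forces the intersection with $O^k$: instead of approximating $[[S_G]]$ independently, one synchronizes both approximations on the common thin carrier $[[S]]$ so that a single $\mathbb{F}^k$-optional set governs both $S_k$ and $G^k$.
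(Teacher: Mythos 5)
Your proof is correct and follows essentially the same route as the paper: both parts hinge on Lemma~\ref{smoothfiltration}(iii) applied to the optional graphs $[[S]]$ and (an encoding of) $G\cap\{S<\infty\}$, exploiting that any optional subset of a graph is thin so that its debut/section recovers $S$ itself. The only differences are technical: the paper extracts $S_k$ from $O^k$ via the section theorem (accepting an $\varepsilon/2$ loss) where you use the debut theorem, and it builds $G^k$ by pulling the approximation of $J$ (where $G\cap\{S<\infty\}=\Phi^{-1}(J)$, $\Phi(\omega)=(S(\omega),\omega)$) back through $\Phi_k(\omega)=(S_k(\omega),\omega)$, whereas you approximate the graph of the restricted stopping time $S_G$ and intersect with $O^k$ on the common carrier $[[S]]$; your variant has the merit of making the verification $G^k\in\mathcal{F}^k_{S_k}$ and the containment $G^k\subset G\cap\{S<\infty\}$ fully explicit, which the paper leaves implicit.
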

\begin{pf}
Let $S$ be an arbitrary $\mathbb{F}$-stopping time. Since the graph
$\lbra S\rbra $ belongs to $\mathcal{O}$, we may find a sequence $(O^k)_{k\ge
1}$ satisfying item (iii) in Lemma~\ref{smoothfiltration}.
For an arbitrary $\varepsilon>0$, let $k$ be large enough in such way that
\[
\mathbb{P}\bigl[\pi\bigl(\lbra S\rbra\bigr) - \pi\bigl(O^k\bigr)
\bigr] < \varepsilon/ 2.
\]
From the standard section theorem there exists an $\mathbb
{F}^k$-stopping time $S_k$ such that
\[
\lbra S_k\rbra\subset O^k \subset \lbra S\rbra
\quad\mbox{and}\quad \mathbb{P}\bigl[\pi\bigl(O^k\bigr)\bigr]\le
\mathbb{P}[S_k < \infty] + \varepsilon/ 2.
\]
Then it follows that $\mathbb{P}[\pi(S) - \pi(\lbra S_k\rbra )] = \mathbb
{P}[S_k \neq S] < \varepsilon$ for $k$ large
enough. This allows us to conclude the first part of the lemma.\vadjust{\goodbreak} For the
second part, let us recall that $\mathcal{F}_{S} \cap\{ S < \infty\}
= \{\Phi^{-1} (O)\dvtx O \in
\mathcal{O}\}$, where $\Phi(w) = (S(w),w)$ for any $w \in\{S < \infty
\}$. Then, for any $G \in\mathcal{F}_S$,
there exists an optional set $J \in\mathcal{O}$ such that $G \cap\{ S
< \infty\} = \Phi^{-1} (J)$. We denote
by $J^k$ the sequence of sets satisfying item (iii) in Lemma \ref
{smoothfiltration} and $G^{k} = \Phi^{-1}_{k}
(J^k)$, where $\Phi_{k}(w) = (S^k(w),w)$ for any $w \in\{S^k < \infty
\}$ and $(S_k)_{k\ge1}$ is the sequence
of stopping times obtained from the first part. Then we conclude that
$\mathbb{P}[G \cap\{S<\infty\} - G^k]\le
\mathbb{P}[\pi(J) - \pi(J^k)] \rightarrow0$ as $k\rightarrow\infty$.
\end{pf}

Summing up the above lemmas we arrive at the following result.
%
\begin{proposition}\label{relcompact}
Assume that a Wiener functional $X$ satisfies $\mathcal{E}_2(X)<\infty
$. Then the set $\{M^{k,X}_{\cdot}; k\ge
1\}$ is $\sigma(B^2,\mathrm{M}^{2})$---relatively sequentially
compact where every limit point belongs to $\mathrm{H}^2$.
\end{proposition}
\begin{pf}
From Remark~\ref{h2compact}, we know that $\mathcal{Z}=\{Z^{k,X}; k\ge
1\}$ is weakly relatively compact in
$\mathrm{H}^2$ and therefore any sequence in $\mathcal{Z}$ admits a
weakly convergent subsequence in $\mathrm{H}^2$.
With a slight abuse of notation, let us denote by $Z^{k,X}$ this
convergent subsequence in $\mathrm{H}^2$ and $Z$ the respective $\mathrm
{H}^2$-martingale limit point. Let us fix an $\mathbb{F}$-stopping time
$S$ which is bounded by the terminal time $T$. We claim that
$M^{k,X}\rightarrow Z$ in $\sigma(\mathrm{B}^2, M^{2})$. For this, at
first we show that the convergence holds in the $\sigma(\mathrm
{B}^1,\Lambda^{\infty})$-topology. In other words,
\[
\lim_{k\rightarrow\infty}\int M^{k,X}_Sg\,d\mathbb{P}=\int
Z_Sg\,d\mathbb{P}
\]
%
holds for every $g\in L^{\infty}(\mathcal{F}_S)$. Recall that
it is sufficient to prove for indicator functions $g= \one_{G}$ where
$G\in\mathcal{F}_S$. From Lemma~\ref{c2} there exists a sequence of
stopping times $S_{k}$ and $G^{k} \in\mathcal{F}^{k}_{S^{k}}$
satisfying $\lim_{k\rightarrow\infty} \mathbb{P} [ G - G^{k} ]=0$. By
construction, one should notice from the proof of Lemma~\ref{c2} that
$S_k = S$ on $G^k$ for every $k\ge1$. Moreover, the martingale
property yields
\begin{eqnarray*}
\int_{G} Z^{k,X}_S \,d\mathbb{P} &=&
\int_{G-G^{k}} M^{k,X}_T \,d\mathbb{P} + \int
_{G^k} \mathbb{E} \bigl[ M^{k,X}_T \mid
\mathcal{F}^{k}_{S^{k}} \bigr] \,d\mathbb{P}
\\
&=& \int_{G-G^{k}} M^{k,X}_T \,d\mathbb{P} -
\int_{G - G^{k}} M^{k,X}_S \,d\mathbb{P} + \int
_{G} M^{k,X}_S \,d\mathbb{P}.
\end{eqnarray*}
Therefore, the uniform integrability assumption yields
\begin{eqnarray*}
&&
\biggl|\int_{G}Z^{k,X}_S\,d\mathbb{P} - \int
_{G}M^{k,X}_S\,d\mathbb{P} \biggr| \\
&&\qquad= \biggl|\int
_{G-G^k}M^{k,X}_T\,d\mathbb{P} - \int
_{G-G^k}M^{k,X}_S\,d\mathbb{P} \biggr|
\longrightarrow0 \qquad\mbox{as } k\rightarrow\infty.
\end{eqnarray*}

This shows that $\lim_{k\rightarrow\infty}( M^{k,X}, C) = (Z, C )$ for
every $C\in \Lambda^{\infty}$ and therefore we may conclude that
$\lim_{k\rightarrow\infty}M^{k,X}=Z$ in the\vadjust{\goodbreak}
$\sigma(\mathrm{B}^1,\Lambda^{\infty})$-topology. The uniform
integrability of $\{\sup_{0\le t \le T}|M^{k,X}_t|; k\ge1\}$ and Remark
\ref{moko} allow us to conclude that $M^{k,X}\rightarrow Z$ weakly in
$\mathrm {B}^1$. We now claim that the subsequence $M^{k,X}$ actually
converges to $Z$ weakly in $\mathrm{B}^2$. For this, let us consider an
arbitrary linear functional $Y\in\mathrm{M}^2$ given by $Y =
(Y^{\mathit{pr}}, Y^{\mathit{pd}})$. Let $(S_n)_{n\ge1}$ be a
localizing\vspace*{1pt} sequence of $\mathbb {F}$-stopping times such that
$\operatorname{Var} (Y^{\mathit{pr}}_{\wedge S_n})$ and
$\operatorname{Var} (Y^{\mathit{pd}}_{\wedge S_n})$ are bounded for
every $n\ge1$. Let us denote by $Y^n$ the respective stopped linear
functional $Y^n\in\mathrm{M}^\infty $; $n\ge1$. The finite energy
assumption yields
%
\begin{eqnarray}
\label{comh2} \bigl|\bigl(Y,M^{k,X}\bigr)-(Y,Z)\bigr|&\le&\bigl|\bigl(Y^n,M^{k,X}
\bigr)-\bigl(Y^n,Z\bigr)\bigr|\nonumber\\[-8pt]\\[-8pt]
&&{} +\bigl\|Y^n-Y\bigr\|_{M^2}
\bigl(\| Z\|_{B^2} +\mathcal{E}^{1/2}_2(X) \bigr).\nonumber
\end{eqnarray}
Since $\|Y^n-Y\|_{M^2}\rightarrow0$ as $n\rightarrow\infty$ and $Y^n
\in\mathrm{M}^\infty$ for every $n\ge1$, we shall use the $\mathrm
{B}^1$-weak convergence of $M^{k,X}$ to $Z$ and (\ref{comh2}) to
conclude that $\lim_{k\rightarrow\infty}M^{k,X}=Z$ weakly in $\mathrm
{B}^2$. In other words, $\{M^{k,X};k\ge1 \}$ is a $\mathrm
{B}^2$-weakly, relatively, sequentially compact set where every limit
point belongs to $\mathrm{H}^2$.
\end{pf}

In the sequel, we introduce a covariation notion which plays a key role
in the numerical scheme of the stochastic derivative. We stress here
that it is not our purpose to give a more general definition of a
quadratic variation. Instead, we only need a slightly different type of
approximation due to the (a priori) lack of regularity of the Wiener
functionals.
%
\begin{definition}\label{covariation}
Let $X$ and $Y$ be Wiener functionals with $\mathbb{F}^k$-projections,
$\delta^kX$ and
$\delta^kY$, respectively. We say that $X$ admits the
$\delta$-\textit{covariation} w.r.t. $Y$ if the limit
%
\begin{equation}
\label{deltacov} \langle X,Y\rangle^{\delta}_t:=
\lim_{k\rightarrow\infty}\bigl[\delta^k X, \delta^k Y
\bigr]_t
\end{equation}
exists weakly in $L^1$ for every $t\in[0,T]$.
\end{definition}
%
\begin{remark}
In the particular case of Brownian semimartingales, one can easily
check that the $\delta$-covariation coincides with the usual quadratic
variation by using Lemma~\ref{energyresult} and Proposition
\ref{keyresult}.
\end{remark}
%
\begin{remark}
The reason for choosing the $L^1$-weak topology for the covariation is
due to the lack of path regularity of processes which represents
Brownian martingales. We will see that the $L^1$-weak topology is the
correct one if one attempts to get a robust approximation scheme in
full generality without requiring additional assumptions (see Remark
\ref{mainint3}).
\end{remark}

Next, we prove some technical results which will allow us to state
Theorem~\ref{mainresult} which is the main result of this section. Not
surprisingly, the quadratic variation and energy
notions will play a key role in our result.\vadjust{\goodbreak}

\begin{lemma}\label{convinteg}
Let $H_\cdot=\mathbb{E}  [\one_{G}| \mathcal{F}_\cdot ]$ and
$H^k_\cdot= \mathbb{E} [ \one_{G} |
\mathcal{F}_{\cdot}^k ]$ be positive and uniformly integrable
martingales with respect to the filtrations
$\mathbb{F}$ and $\mathbb{F}^k$, respectively, where $G \in\mathcal
{F}_T$. If $W\in\mathrm{H}^\alpha(\mathbb{F})$ for some $\alpha>2$ then
\[
\biggl\|\int_{0}^{\cdot} H_s
\,dW_s - \oint_{0}^{\cdot} H^k_s
\,d\delta^kW_s \biggr\|_{\mathrm{B}^2} \rightarrow0 \qquad\mbox{as }
k\rightarrow\infty.
\]
\end{lemma}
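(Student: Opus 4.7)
The plan is to reduce the $B^2$-convergence to a terminal-time $L^2$-estimate and then establish that estimate via integration by parts. Set $M_\cdot := \int_0^\cdot H_s\,dW_s$ and $M^k_\cdot := \oint_0^\cdot H^k_s\,d\delta^k W_s$; the $\mathbb{F}^k$-isometry combined with $|H^k|\le 1$ and boundedness of $W$ yields the uniform bound $\mathbb{E}|M^k_T|^2\le \|W\|_\infty^2$. Introducing the $\mathbb{F}^k$-martingale $N^k_t := \mathbb{E}[M_T\mid \mathcal{F}^k_t]$, I would split
\[
M - M^k = (M - N^k) + (N^k - M^k).
\]
The difference $N^k - M^k$ is an $\mathbb{F}^k$-martingale, so Doob's $L^2$-inequality in $\mathbb{F}^k$ bounds its $B^2$-norm by $2\|N^k_T - M^k_T\|_{L^2}\le 2\|M_T - M^k_T\|_{L^2} + 2\|M_T - N^k_T\|_{L^2}$. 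Since $\mathcal{F}^k_T \uparrow \mathcal{F}_T$ by Lemma~\ref{smoothfiltration}(i), L\'evy's $L^2$-upward theorem gives $\|M_T - N^k_T\|_{L^2}\to 0$. For $M - N^k$, I would use the weak compactness in $\text{H}^2$ developed in Proposition~\ref{relcompact}: every subsequential limit of $\{N^k\}$ must have terminal value $M_T$ (since $N^k_T \to M_T$ in $L^2$), hence coincides with the $\mathbb{F}$-martingale $M$, and the norm convergence $\|N^k_T\|_{L^2}\to \|M_T\|_{L^2}$ upgrades weak $B^1$-convergence to strong $B^2$-convergence via Doob. Everything thus reduces to proving $\mathbb{E}|M_T - M^k_T|^2 \to 0$.

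For this key estimate I integrate by parts in both filtrations. Since $H$ and $W$ are continuous $\mathbb{F}$-martingales with $H_T = 1_G$, the $\mathbb{F}$-product rule gives
\[
M_T = 1_G\,W_T - H_0 W_0 - \int_0^T W_s\,dH_s - \langle H, W\rangle_T.
\]
For $M^k_T$, both $H^k$ and $\delta^k W$ are purely discontinuous $\mathbb{F}^k$-martingales of finite variation (Remark~\ref{jump}). The identity $\int H^k\,d\delta^k W = \int H^k_-\,d\delta^k W + [H^k,\delta^k W]$, combined with the fact that $\int H^k_-\,d\delta^k W$ is already an $\mathbb{F}^k$-martingale, shows that the $\mathbb{F}^k$-dual predictable projection of $\int_0^\cdot H^k\,d\delta^k W$ equals $\langle H^k, \delta^k W\rangle$, so that $\oint_0^T H^k\,d\delta^k W = \int_0^T H^k\,d\delta^k W - \langle H^k, \delta^k W\rangle_T$. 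Applying $\mathbb{F}^k$-IBP to the Lebesgue--Stieltjes integral yields
\[
M^k_T = H^k_T\,\delta^k W_T - H^k_0\,\delta^k W_0 - \int_0^T \delta^k W_{s-}\,dH^k_s - \langle H^k, \delta^k W\rangle_T,
\]
and the constants match, $H^k_0 \delta^k W_0 = H_0 W_0$, so they cancel term by term.

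The three remaining pair-comparisons go as follows: (i) $H^k_T\,\delta^k W_T \to 1_G\,W_T$ in $L^2$ using $H^k_T\to 1_G$ and $\delta^k W_T \to W_T$ in $L^2$ (L\'evy upward, Lemma~\ref{smoothfiltration}(i)) together with $|H^k|\le 1$ and $W$ bounded; (ii) $\int_0^T \delta^k W_{s-}\,dH^k_s \to \int_0^T W_s\,dH_s$ in $L^2$, via the $\mathbb{F}^k$-isometry $\mathbb{E}|\int \delta^k W_-\,dH^k|^2 = \mathbb{E}\int (\delta^k W_-)^2\,d\langle H^k, H^k\rangle$ (and its $\mathbb{F}$-analogue), the pointwise approximation $\delta^k W_{s-}\to W_s$ driven by $\|B_\cdot - A^k_\cdot\|_\infty\le 2^{-k}$ from Lemma~\ref{useful}, and convergence of the angle-bracket measures $d\langle H^k, H^k\rangle$; (iii) $\langle H^k,\delta^k W\rangle_T \to \langle H,W\rangle_T$ in $L^1$, using $H^k = \delta^k H$ from Remark~\ref{martingaleprojectionremark}, the $\delta$-covariation framework (giving $[\delta^k H,\delta^k W]\to [H,W]$), and the continuity of the limit $\langle H,W\rangle = [H,W]$ (its $\mathbb{F}$-predictability ensures that the $\mathbb{F}^k$-compensators share the same limit as the raw covariations).

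The principal obstacle is step (iii): one must control not merely the raw covariations $[\delta^k H,\delta^k W]$ (handled by the $\delta$-covariation machinery) but also their $\mathbb{F}^k$-predictable compensators $\langle H^k,\delta^k W\rangle$, and identify the limit as the continuous $\mathbb{F}$-object $\langle H,W\rangle$. A secondary difficulty in step (ii) is simultaneously passing to the limit in integrands, integrators, and angle-bracket measures that live on the $\mathbb{F}^k$-discrete skeleton.
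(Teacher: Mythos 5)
Your reduction to the terminal-time estimate $\mathbb{E}|M_T-M^k_T|^2\to 0$ is a reasonable opening move (the pieces $N^k-M^k$ and $M-N^k$ can indeed be handled by Doob in $\mathbb{F}^k$ and by the Coquet--M\'emin--S\l{}omi\'nski convergence of filtrations, respectively), but the two steps that carry all the weight in your integration-by-parts comparison do not go through as sketched. In step (ii) you must compare $\int_0^T \delta^kW_{s-}\,dH^k_s$ with $\int_0^T W_s\,dH_s$, which are integrals against \emph{different integrators living in different filtrations}. Applying the isometry to each separately says nothing about their difference: the cross term $\mathbb{E}\big[\int \delta^kW_-\,dH^k\cdot\int W\,dH\big]$ is not given by a bracket formula, because $H^k$ is not an $\mathbb{F}$-martingale and the naive pathwise bracket $[H^k,H]$ vanishes identically ($H$ is continuous, $H^k$ is pure-jump) even though the two integrals are certainly not orthogonal. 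A pathwise bound via $Var(H^k)_T$ also fails since the total variation of $H^k$ blows up as $k\to\infty$. This is precisely the point where a genuine stability theorem for stochastic integrals with converging integrators is unavoidable; the paper sidesteps the cross-filtration issue entirely by writing the difference as $\int(H-H^k)\,dW+\int H^k\,d(W-\delta^kW)$ -- both integrals taken in the single filtration $\mathbb{F}$, since $H^k$ is $\mathbb{F}$-adapted and $L^k:=W-\delta^kW$ is an $\mathbb{F}$-semimartingale tending to $0$ -- and then invokes the Kurtz--Protter weak limit theorem for $\int H^k_-\,dL^k$, after first disposing of the jumps of $H^k$ (which vanish uniformly by quasi-left continuity and the uniform convergence $H^k\to H$ coming from weak convergence of filtrations plus continuity of $H$).

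Step (iii) has a second, independent problem: you propose to control $\langle H^k,\delta^kW\rangle_T$ via ``the $\delta$-covariation framework giving $[\delta^kH,\delta^kW]\to[H,W]$'', but that statement is Corollary~\ref{martingaleconvergence}, which in the paper is deduced from Lemma~\ref{wdp}, which in turn is proved \emph{using} the very lemma you are trying to establish. As written the argument is circular. Moreover, even granting that convergence, it is only weak convergence in $L^1$, whereas your reduction step requires $\|M_T-M^k_T\|_{L^2}\to 0$; a weak-$L^1$ limit of one of the four terms cannot feed back into the Doob/$B^2$ bound. To repair the proof along your lines you would need an independent, quantitative $L^2$ treatment of both the cross-filtration integral in (ii) and the compensator in (iii); the paper's route avoids both by never leaving the filtration $\mathbb{F}$ and by outsourcing the hard stability statement to Kurtz--Protter.
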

\begin{pf}
Throughout the proof we write $C$ to denote a positive constant which
may differ from line to line. Let us write
\begin{eqnarray*}
&&
\int_{0}^t H_s \,dW_s
- \oint_{0}^t H^k_s d
\delta^k W_s \\
&&\qquad= \int_{0}^t
\bigl[ H_s- H^k_s\bigr] \,dW_s +
\biggl[ \int_{0}^t H^k_s
\,dW_s - \oint_{0}^t H^k_s
\,d\delta^k W_s \biggr]
\\
&&\qquad=:T^1_k(t) + T^2_k(t), \qquad 0\le t
\le T.
\end{eqnarray*}
From the weak convergence of $\mathbb{F}^k$ to
$\mathbb{F}$ [see (ii) in Lemma~\ref{smoothfiltration}] and the
fact that $H$ is a continuous process it follows that
%
\begin{equation}
\label{a2} H^k\rightarrow H\qquad
\mbox{uniformly in probability as } k
\rightarrow\infty.
\end{equation}

Burkholder and H\"older inequalities yield
\[
\biggl\| \int_0^\cdot\bigl(H^k_t-H_t
\bigr)\,dW_t \biggr\|^2_{B^2} \le C\mathbb{E}^{{1}/{p}}
\sup_{0\le t \le T}\bigl|H_t - H^k_t\bigr|^{2p}
\mathbb{E}^{{1}/{q}}[W,W]^q_T
\]
for $q=\frac{\alpha}{2}$ and $p=\frac{\alpha}{\alpha-2}$ with
$\alpha>2$. Therefore, we may conclude that $T^1_k\rightarrow0$ in
$\mathrm{B}^2$ as $k\rightarrow\infty$. In order to prove that $T^2_k$
vanishes when $k\rightarrow0$, we split it into the following terms:
%
\begin{eqnarray}
\label{a3} T^2_k(t) &=& \int_{0}^t
\bigl[ H^k_s - H^k_{s-}
\bigr]\,dW_s - \oint_{0}^t \bigl[
H^k_s - H^k_{s-} \bigr]\,d
\delta^kW_s\nonumber\\[-8pt]\\[-8pt]
&&{} + \int_{0}^t
H^k_{s-} \,dW_s - \int_{0}^t
H^k_{s-}\,d\delta^kW_s.\nonumber
\end{eqnarray}
%
%
We shall estimate in the same way
\[
\biggl\|\int_0^{\cdot} \bigl[ H^k_s
- H^k_{s-} \bigr]\,dW_s \biggr\|^2_{\mathrm{B}^2}
\le C\mathbb{E}^{1/p}\sup_{n\ge
1}\bigl|H^k_{T^k_n}-H^k_{T^k_{n-1}}\bigr|^{2p}
\one_{\{T^k_n\le T\}}\mathbb {E}^{{1}/{q}}[W,W]^{q}_T
\]
for $p$ and $q$ as above. One can easily check (see, e.g.,
Lemma~\ref{gkn}) that ${\sup_{n\ge
1}}|H^k_{T^k_n}-H^k_{T^k_{n-1}}|\rightarrow0$ as $k\rightarrow\infty$
in $L^r$ for any $r>1$. Therefore,
\[
\biggl\|\int_0^{\cdot} \bigl[ H^k_s
- H^k_{s-} \bigr]\,dW_s \biggr\|_{\mathrm{B}^2}
\rightarrow0 \qquad\mbox{as } k\rightarrow\infty.
\]
By using the representation
$\delta^kW_t = \oint_0^t\mathcal{D}_s\delta^kW\,dA^k_s=\mathbb
{E}[W_T|\mathcal{F}^k_t]$, we shall use the Doob maximal inequality to
estimate in the same way
\[
\biggl\|\oint_0^{\cdot} \bigl[H^k_s -
H^k_{s-} \bigr]\,d\delta^kW_s
\biggr\|^2_{\mathrm{B}^2}\le C\mathbb {E}^{{1}/{\alpha}}|W|^\alpha_T
\mathbb{E}^{1/p}\sup_{0\le t \le T} \bigl|H^k_t -
H^k_{t-}\bigr|^{2p} \rightarrow0
\]
as $k\rightarrow\infty$. It remains to estimate the last
part in (\ref{a3}). We claim that $(H^k_{-},\delta^k W)$ satisfies the
assumptions of~\cite{protter}, Theorem 2.7. To see this, one notices
that the linearity of the conditional expectation, (\ref{a2}) and the
path continuity of $H$ and $W$ yield
\[
H^k-H + \delta^kW-W\rightarrow0 \qquad\mbox{uniformly in
probability.}
\]
Since $\lim_{k\rightarrow\infty}\mathbb{E}\sup_{0\le t\le
T}|H^k_t-H^k_{t-}|=0$ we actually have $(H^k_{-},\delta^kW)\rightarrow
(H,W)$ in probability on the two-dimensional Skorohod space. Moreover,
a simple application of the maximal Doob and Burkholder inequalities
ensures that $\delta^kW$ satisfies~\cite{protter}, assumption C2.7. Therefore,
\[
\int_0^\cdot H^k_{s-}\,d
\delta^kW_s \rightarrow\int_0^\cdot
H_s\,dW_s \qquad\mbox{uniformly in probability}.
\]
Of course, $\int_0^\cdot H^k_{s-}\,dW_s \rightarrow\int_0^\cdot H_s\,dW_s$ uniformly in probability. By using the assumption
that $W\in\mathrm{H}^\alpha$ for $\alpha>2$, we have $\int_0^\cdot
H^k_{s-}\,d\delta^kW_s + \int_0^\cdot H^k_{s-}\,dW_s$ is bounded in $\mathrm
{B}^\alpha$. This shows that $T^2_k\rightarrow0$ in $\mathrm{B}^2$ as
$k\rightarrow\infty$ and therefore the proof is complete.
\end{pf}


The next result is fundamental for the approach taken in this work
since it allows us to compute the $\delta$-covariation under a
compactness assumption.

\begin{lemma} \label{wdp} Let $X$ be a finite
energy Wiener functional with the $\mathbb{F}^k$-decomposition
given by $(M^{k,X},N^{k,X})$. Let $\{M^{k_i,X};i\ge1\}$ be a $\mathrm
{B}^2$-weakly convergent
subsequence such that $\lim_{i\rightarrow\infty}M^{k_i,X}=Z$, where
$Z\in\mathrm{H}^2$. If $W\in\mathrm{H}^2$, then
%
\begin{equation}
\label{LLL1} \lim_{i\rightarrow\infty}\bigl[M^{k_i,X}, \delta^{k_i}W
\bigr]_{t}= [Z,W]_{t} \qquad\mbox{weakly in } L^1
\end{equation}
for every $t \in[0,T]$.
\end{lemma}
\begin{pf}
With a slight abuse of notation, let $Z^{k,X}$ be the
$\mathbb{F}$-martingale subsequence obtained
from (\ref{fundamentalmartingale}), Remark~\ref{h2compact} and
Proposition~\ref{relcompact} such that $\lim_{k\rightarrow\infty
}Z^{k,X}=Z$ and $\lim_{k\rightarrow\infty}M^{k,X}=Z$ in
$\sigma(\mathrm{B}^2,\mathrm{M}^{2})$. By using representation (\ref
{martingaleprojection}) and the weak convergence $\mathbb
{F}^k\rightarrow\mathbb{F}$, we notice that $\delta^kW\rightarrow W$
in $\sigma(\mathrm{B}^2,\mathrm{M}^2)$ as $k\rightarrow\infty$ for
each $W\in\mathrm{H}^2$. Thanks to~\cite{Dellacherie3}, Theorem 7, we
know that $[Z^{k,X}, U]_t {\rightarrow} [Z,U]_t$ weakly in $L^1(\mathbb
{P})$ for every $t\in[0,T]$ and $U$ a BMO $\mathbb{F}$-martingale.
Given $G \in
\mathcal{F}_T$, let us consider the $\mathbb{F}^k$-martingale
$H^k_\cdot= \mathbb{E} [ \one_{G} | \mathcal{F}_{\cdot}^k ]$ and $W$
a bounded Brownian martingale.
At first, one should notice that the finite energy assumption gives\vadjust{\goodbreak}
$M^{k,X}\in
\mathrm{H}^2(\mathbb{F}^k)$ for every $k\ge1$. By using the $\mathbb{F}^k$-dual
optional projection property we shall write
\[
\mathbb{E} \bigl[ \one_{G} \bigl[M^{k,X},
\delta^kW\bigr]_t \bigr] = \mathbb{E}
\bigl[M^{k,X}, J^{k} \bigr]_t = \mathbb{E} \bigl[
M^{k,X}_{t} J^{k}_{t} \bigr],\qquad 0\le t
\le T,
\]
where $J^{k}$ is the $\mathbb{F}^k$-square integrable martingale
given by the optional integral $\oint H^k \,d\delta^kW$. In the same
manner, we have that
\[
\mathbb{E} \bigl[ \one_{G} \bigl[Z^{k,X}, W
\bigr]_t \bigr] = \mathbb{E} \bigl[Z^{k,X}, J
\bigr]_{t} = \mathbb{E} \bigl[ Z^{k,X}_t
J_t \bigr],\qquad 0\le t\le T,
\]
where $J$ is the stochastic integral $\int H \,dW$ and $H=\mathbb{E}[\one_{G}|\mathcal{F}_{\cdot}]$. Moreover,
\begin{eqnarray*}
\mathbb{E} \bigl[ Z^{k,X}_t J_t \bigr] -
\mathbb{E} \bigl[ M^{k,X}_t J^{k}_t
\bigr] &=& \mathbb{E} \bigl[ M^{k,X}_t \bigl( J_t
- J^{k}_t \bigr) \bigr] - \mathbb{E} \bigl[ \bigl(
M^{k,X}_t - Z^{k,X}_t \bigr)
J_t \bigr]
\\[-1pt]
&=\!&: T^k_1(t) + T^k_2(t),\qquad 0\le
t \le T.
\end{eqnarray*}
We fix $t\in[0,T]$ and we notice that it is sufficient to
prove that $T^k_1(t) + T^k_2(t)\rightarrow0$
as $k\rightarrow\infty$. The first term $\lim_{k\rightarrow\infty
}T^k_1(t)= 0$ because of the finite energy
assumption and Lemma~\ref{convinteg}. By noting that both subsequences
$Z^{k,X}$ and $M^{k,X}$ converge
to the same limit in $\sigma(\mathrm{B}^2, \mathrm{M}^{2})$, we shall
take the linear functional $Y_\cdot=\one_{\{t\le\cdot\}}J_t \in
\mathrm{M}^2$ to conclude that $T^k_2(t)\rightarrow0$ as $k\rightarrow
\infty$. Therefore, (\ref{LLL1}) holds for any bounded martingale $W$.
If $W\in\mathrm{H}^2$ then we shall take a sequence of bounded
martingales $W^n$ such that $W^n\rightarrow W$ in $\mathrm{H}^2$ as
$n\rightarrow\infty$. Moreover, Burkholder and maximal Doob
inequalities yield
%
\begin{eqnarray}
\label{boundedseq}
&&
\bigl|\mathbb{E}\one_{G}\bigl[M^{k,X},
\delta^kW\bigr]_t-\mathbb{E}\one_{G}[Z,W]_t
\bigr|\nonumber\\[-1pt]
&&\qquad
\le C \mathbb{E} \bigl|\bigl[M^{k,X},\delta^k
\bigl(W-W^n\bigr)\bigr]_t \bigr|
\nonumber
\\[-1pt]
&&\qquad\quad{}
+ \bigl|\mathbb{E}\one_{G}\bigl[M^{k,X},\delta^kW^n
\bigr]_t- \mathbb{E}\one_{G}\bigl[Z,W^n
\bigr]_t \bigr|
\nonumber\\[-8pt]\\[-8pt]
&&\qquad\quad{}+C\mathbb{E} \bigl|\bigl[Z,W^n-W\bigr]_t \bigr|
\nonumber
\\[-1pt]
&&\qquad\le C \bigl(\mathcal{E}_2^{1/2}(X)+\|Z\|_{\mathrm{B}^2}
\bigr)\bigl\| W^n_T-W_T\bigr\|_{L^2}
\nonumber
\\[-1pt]
&&\qquad\quad{}+ \bigl|\mathbb{E}\one_{G}\bigl[M^{k,X},\delta^kW^n
\bigr]_t- \mathbb{E}\one_{G}\bigl[Z,W^n
\bigr]_t \bigr|.\nonumber
\end{eqnarray}
Inequality (\ref{boundedseq}) and the previous arguments
allow us to conclude the proof.
\end{pf}

Next, we give a necessary and sufficient condition for the existence of
the martingale limit.
%
\begin{proposition}\label{keyresult}
Let $X$ be a Wiener functional such that $\mathcal{E}_2(X)< \infty$.
Then $M^X:=\lim_{k\rightarrow\infty}M^{k,X}$ exists weakly in $\mathrm
{H}^2$ if, and only if, the $\delta$-covariation $\langle X,B\rangle^\delta$ exists. In this case, $\langle X, B\rangle^\delta_\cdot
=[M^X,B]_\cdot$.
\end{proposition}
\begin{pf}
If $X$ has finite energy, then by Proposition~\ref{relcompact} we know
that $\{M^{k,X};k\ge1\}$ is $\sigma(\mathrm{B}^2,\mathrm
{M}^{2})$---relatively sequentially compact where all limit points
belong to $\mathrm{H}^2$. By assumption, the $\delta$-covariation
$\langle X, B\rangle^{\delta}$ exists and therefore for every $t\in[0,T]$,
\[
\lim_{i\rightarrow\infty}\bigl[M^{k_i,X},A^{k_i}\bigr]_t=
\lim_{m\rightarrow
\infty}\bigl[M^{k_m,X},A^{k_m}\bigr]_t=
\langle X,B\rangle^\delta_t\vadjust{\goodbreak}
\]
weakly in $L^1$ for any two distinct $\mathrm{B}^2$-weakly convergent
subsequences\break $\{ M^{k_i,X}\}_{i=1}^\infty$ and $\{M^{k_m,X}\}_{m=1}^\infty$. In particular, if $\lim_{i\rightarrow\infty
}M^{k_i,X}=M$ and\break $\lim_{m\rightarrow\infty}M^{k_m,X}=M'$, then Lemma
\ref{wdp} yields
\begin{eqnarray*}
\lim_{i\rightarrow\infty}\bigl[M^{k_i,X},A^{k_i}\bigr]_t&=&
\lim_{m\rightarrow
\infty}\bigl[M^{k_m,X},A^{k_m}\bigr]_t\\
&=&
\bigl[M',B\bigr]_t=[M,B]_t \qquad\mbox{weakly
in } L^1
\end{eqnarray*}
for $0\le t\le T$, and therefore $[M-M',B]_\cdot=0$. The predictable
representation of the Brownian motion yields $M=M'$. In this case,
$M^{k,X}$ should be convergent and Lemma~\ref{wdp} yields $\langle X,B
\rangle^\delta= [M^X,B]$, where $M^X:=\lim_{k\rightarrow\infty
}M^{k,X}$. Reciprocally, if $\lim_{k\rightarrow\infty}M^{k,X}=M^X\in
\mathrm{H}^2$ exists weakly in $\mathrm{B}^2$, then we may again invoke
Lemma~\ref{wdp} to conclude that $\langle X, B \rangle^\delta$ exists.
\end{pf}

The main result of this section gives the structural conditions for our
discretization scheme to work. In fact, those conditions are similar to
weak Dirichlet-type processes where the notion of covariation is
computed in terms of $\langle\cdot,\cdot\rangle^\delta$.
%
\begin{theorem}\label{mainresult}
Let $X$ be a finite energy Wiener functional such that $\lim_{k\rightarrow\infty}\delta^kX=X$ weakly in $\mathrm{B}^2$ and
$\langle X,B \rangle^\delta_\cdot$ exists. Let $(M^{k,X},N^{k,X})$ be
the canonical decomposition of $\delta^kX$. Then there exists a unique
martingale $M^X$ in $\mathrm{H}^2$ such that $N^X:=X- X_0 -
M^{X}$ satisfies the following orthogonality condition:
\[
\bigl\langle N^{X}, B\bigr\rangle^{\delta} \equiv0.
\]
If this is the case, we may
write
%
\begin{equation}
\label{mrdec} X=X_0 + M^X+ N^X
\end{equation}
and this decomposition is unique. Moreover,
$M^{k,X}\rightarrow M^X$ and $N^{k,X}\rightarrow N^X$ weakly in $\mathrm
{B}^2$ as $k\rightarrow\infty$.
\end{theorem}
\begin{pf}
By Proposition~\ref{keyresult} we know that
\[
M^X:=\lim_{k\rightarrow\infty}M^{k,X}
\]
exists and $M^X\in\mathrm{H}^2$.
From assumption
$\lim_{k\to\infty}\delta^k X= X$, we shall define
$N^X:=\lim_{k\rightarrow\infty}N^{k,X}$ weakly in $\mathrm{B}^2$. By the
very definition, we have
\[
\delta^kN^X = M^{k,X} - \delta^kM^{X}
+ N^{k,X}.
\]
The path continuity of $N^{k,X}$ yields
%
\begin{equation}
\label{fff} \bigl[\delta^kN^X, A^k
\bigr]_{t} = \bigl[M^{k,X} - \delta^kM^{X},A^k
\bigr]_t,\qquad 0\le t \le T; k\ge1.
\end{equation}

The weak convergence of $\mathbb{F}^k$ to $\mathbb{F}$ [see Lemma \ref
{smoothfiltration}, item (ii)] and relation (\ref
{martingaleprojection}) yield $\delta^kM^X=\mathbb{E}[M^X_T|\mathcal
{F}^k_\cdot]\rightarrow M^X$ uniformly in probability as $k\rightarrow
\infty$. By Lem\-ma~\ref{energyresult}, we know that $\mathcal{E}_2(M^X)<
\infty$ and therefore $\delta^kM^X\rightarrow M^X$ in $\sigma(\mathrm
{B}^2,M^2)$. Lemma~\ref{wdp}, Proposition~\ref{keyresult} and (\ref
{fff}) yield $\langle N^X, B\rangle^\delta=[M^X-M^X,B]=0$.\vadjust{\goodbreak}

The uniqueness of the decomposition is an immediate consequence of the
orthogonality property of the nonmartingale component, the predictable
representation property of the Brownian motion and the fact that
$\langle W,B\rangle^\delta= [W,B]$ for every $W\in\mathrm{H}^2$.
\end{pf}

\section{The stochastic derivative}\label{sec4}
\label{derivativesection}
In this section, we provide an explicit approximation scheme for
the martingale representation in the decomposition given in Theorem \ref
{mainresult}. The approximation will be given in terms of $\mathcal
{D}\delta^k$ which can be interpreted in the limit as a
derivative operator on the Wiener space w.r.t. Brownian motion.

For a given Wiener functional $X$, we introduce the following
family of $\mathbb{F}^k$-predictable processes:
%
\begin{equation}
\label{weightder} \mathcal{D}^{k}X:= 0 \one_{\lbra T_{0}^k, T^k_0 \rbra } + \sum
_{n=0}^{\infty}\mathcal{D}_{T^k_n}
\delta^kX \one_{\rbra T_{n}^k,T_{n+1}^k\rbra },
\end{equation}
where
\[
\mathcal{D}^k_sX=\frac{\delta^kX_{T^k_n}-
\delta^kX_{T^k_{n-1}}}{B_{T^k_n} - B_{T^k_{n-1}}} \qquad\mbox{on } \bigl\{
T_{n}^k < s \le T_{n+1}^k \bigr\}, n
\ge1.
\]
In view of Theorem~\ref{mainresult}, the goal of this section is to
show robustness of our approximation scheme in the sense that
\[
\mathcal{D}X:=\lim_{k\rightarrow\infty}\mathcal{D}^kX=H^X
\qquad\mbox{weakly}
\]
whenever $X$ satisfies the assumptions of Theorem \ref
{mainresult} such that the martingale component in (\ref{mrdec}) has a
representation $M^X=\int H^X_s\,dB_s$. One should notice that since there
is no a priori path regularity of $X$ (in particular $H^X$), one has to
choose an appropriate topology in order to get the existence of $\lim_{k\rightarrow\infty}\mathcal{D}^kX$. In the sequel, we denote by
$\lambda$ the usual Lebesgue measure on $[0,T]$.

Let us begin with the following technical lemmas. At first, the
following remark proves to be very useful for the approach taken in
this work. In fact, it will play a key role in the study of the limit
$\lim_{k\rightarrow\infty}\mathcal{D}^kX$ because it allows us to
control the quantity $(\Delta A^k_{T^k_n})^{-1}$ in (\ref{weightder}).
It is a straightforward consequence of the strong Markov property and
the $1/2$-self-similarity of the Brownian motion.
%
\begin{remark}\label{indep}
The stopping time $T^k_n-T^k_{n-1}$ is independent from $\mathcal
{G}^k_{n-1}$ for every $k,n\ge1$. Moreover, $\mathbb
{E}(T^k_n-T^k_{n-1})=2^{-2k}$ for every $k,n\ge1$.
\end{remark}
%
\begin{lemma}\label{gkn}
If $g \in L^{\infty}$, then for every $ 1 < p < \infty$,
\[
\mathbb{E}\sup_{n\ge1} \bigl|\mathbb{E}\bigl[g| \mathcal{G}^k_n
\bigr] - \mathbb {E}\bigl[g|\mathcal{G}^k_{n-1}\bigr]
\bigr|^p \one_{\{T_{n}^k\le T\}} \rightarrow 0
\qquad\mbox{as } k\rightarrow\infty.
\]
\end{lemma}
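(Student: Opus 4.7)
The plan is to rewrite both sides in terms of a pair of cadlag martingales and then exploit the continuity of the Brownian filtration. Concretely, I would set
\[
M^k_t := \mathbb{E}[g\mid \mathcal{F}^k_t], \qquad M_t := \mathbb{E}[g\mid \mathcal{F}_t],
\]
so that $M^k$ is an $\mathbb{F}^k$-martingale and $M$ is an $\mathbb{F}$-martingale, both bounded in absolute value by $\|g\|_\infty$.

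The first step is the crucial reformulation. By the jumping-filtration structure~(\ref{discrete}), on every stochastic interval $[[T^k_n,T^k_{n+1}[[$ the process $M^k$ is constant and equal to $\mathbb{E}[g\mid \mathcal{G}^k_n]$. Hence the jumps of $M^k$ occur precisely at the totally inaccessible times $T^k_n$ and coincide with the martingale differences appearing in the statement:
\[
\sup_{n\ge 1}\Big|\mathbb{E}[g\mid \mathcal{G}^k_n]-\mathbb{E}[g\mid \mathcal{G}^k_{n-1}]\Big| \;=\; \sup_{t\ge 0}|\Delta M^k_t|.
\]
This turns the problem into a uniform control of the jumps of the cadlag martingale $M^k$.

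Next I would invoke the regularity of the limit filtration. Because $\mathbb{F}$ is the natural filtration of Brownian motion, every $\mathbb{F}$-martingale is continuous; in particular $M$ has continuous paths. By Lemma~\ref{smoothfiltration}(ii), $\mathbb{F}^k\to \mathbb{F}$ weakly, and by (\cite{coquet1}, Prop.~4) this entails $M^k\to M$ in probability for the Skorohod topology on any compact interval $[0,T]$. Since the limit is continuous, Skorohod convergence upgrades to uniform convergence in probability on compacts, so in particular
\[
\sup_{0\le t\le T}|\Delta M^k_t|\longrightarrow 0\quad \text{in probability as } k\to\infty.
\]
Combined with the uniform bound $|\Delta M^k_t|\le 2\|g\|_\infty$, the bounded convergence theorem gives $\mathbb{E}\sup_{0\le t\le T}|\Delta M^k_t|^p\to 0$ for every $1<p<\infty$.

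The one step that requires care is matching the supremum over $n\ge 1$ in the statement with the supremum over a bounded time horizon obtained above. Since everywhere else in the paper the relevant processes are considered on $[0,T]$, the reduction to $\{T^k_n\le T\}$ is harmless in applications; the tail $t>T$, if one insists on the full supremum, is controlled by the uniform integrability of $M^k$ and the identification $M^k_\infty\to g$ coming from $\bigvee_k \mathcal{F}^k_\infty = \mathcal{F}_\infty$. The main obstacle is therefore not the convergence itself but this bookkeeping at infinity; once it is cleared, the rest is just ``Skorohod convergence to a continuous limit plus bounded convergence''.
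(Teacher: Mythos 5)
Your argument is correct in substance, but it is genuinely different from the one in the paper. The paper works directly with the discrete-time increments: it introduces the Boylan pseudometric $d(\mathcal{G}^k_n,\mathcal{G}^k_{n-1})$ on sigma-algebras, asserts that $\sup_{n}d(\mathcal{G}^k_n,\mathcal{G}^k_{n-1})\to 0$ because the extra information in $\mathcal{G}^k_n$ enters only through $\sigma(\sigma^k_n,T^k_n)$, and then invokes Boylan's equiconvergence theorem. You instead identify the increments $\mathbb{E}[g\mid\mathcal{G}^k_n]-\mathbb{E}[g\mid\mathcal{G}^k_{n-1}]$ with the jumps of the cadlag $\mathbb{F}^k$-martingale $M^k=\mathbb{E}[g\mid\mathcal{F}^k_\cdot]$ (legitimate, by the jumping-filtration structure~(\ref{discrete}) and optional stopping), and deduce that these jumps vanish uniformly on compacts from the weak convergence $\mathbb{F}^k\to\mathbb{F}$ of Lemma~\ref{smoothfiltration}(ii), the continuity of all martingales in the Brownian filtration, and the fact that Skorohod convergence to a continuous limit is uniform convergence. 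This buys you something real: your argument only uses that $g$ is \emph{fixed}, whereas Boylan's theorem is a statement uniform over all bounded $g$, and the pseudometric estimate underlying the paper's proof is delicate precisely for that reason --- the event $\{\sigma^k_n=1\}$ lies in $\mathcal{G}^k_n$, has probability $1/2$ and is independent of $\mathcal{G}^k_{n-1}$, so $\mathbb{P}(\{\sigma^k_n=1\}\,\Delta\, F)=1/2$ for every $F\in\mathcal{G}^k_{n-1}$ and the pseudometric does not actually tend to zero. Your route, which exploits the convergence of the conditional expectations of the single functional $g$ rather than a metric statement about the sigma-algebras themselves, is therefore the more robust one. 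Two small points of bookkeeping remain on your side: the passage from indicators (the setting of Prop.~4 of the cited reference on weak convergence of filtrations) to general bounded $g$ needs the uniform-in-$k$ Doob maximal estimate $\mathbb{P}(\sup_t\mathbb{E}[|g-g_m|\mid\mathcal{F}^k_t]>\varepsilon)\le\varepsilon^{-1}\mathbb{E}|g-g_m|$, and the supremum over all $n\ge 1$ (equivalently over $t\in(0,\infty)$) versus over $\{T^k_n\le T\}$ is, as you note, a genuine issue --- one must check that $\limsup_k\mathbb{E}|M^k_\infty-M^k_T|$ is small for $T$ large before Doob's inequality controls the jumps beyond time $T$. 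The paper's proof suffers from the same horizon problem (its claim that $\sup_{n\ge1}|T^k_n-T^k_{n-1}|\to 0$ a.s.\ fails for the unrestricted supremum, since the increments are i.i.d.\ and unbounded), and in every application of the lemma only the supremum over $\{T^k_n\le T\}$ is used, so this is a shared and inessential blemish rather than a defect of your approach.
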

\begin{pf}
For a given $g\in L^\infty$, let $X_t=E[g|\mathcal{F}_t], 0\le t \le
T$. Recall that $\delta^kX_t = E[X_T|\mathcal{F}^k_t], 0\le t\le T$ and
therefore $\mathbb{E}[g|\mathcal{G}^k_n] = \delta^kX_{T^k_n}$ on $\{
T^k_n\le T \}$ for each $k$, $n\ge1$. Moreover, $X$ is a bounded $\mathbb
{F}$-martingale with continuous paths. Remark~\ref{ucpconv} yields
$\delta^k X\rightarrow X$ strongly in $\mathrm{B}^p(\mathbb{F})$ as
$k\rightarrow\infty$, $p>1$. We shall write
%
\begin{eqnarray}
\label{a17}
&&
\mathbb{E}^{1/p}\sup_{n\ge1}\bigl|\mathbb{E}\bigl[g|
\mathcal {G}^k_n\bigr]-\mathbb{E}\bigl[g|
\mathcal{G}^k_{n-1}\bigr]\bigr|^p
\one_{\{ T_{n}^k\le T \}} \nonumber\\
&&\qquad\le \mathbb{E}^{1/p}\sup_{n\ge
1}\bigl|
\delta^kX_{T^k_n} - X_{T^k_n}\bigr|^p
\one_{\{T_{n}^k\le T\}}
\nonumber\\[-8pt]\\[-8pt]
&&\qquad\quad{} + \mathbb{E}^{1/p}\sup_{n\ge
1}|X_{T^k_n}-X_{T^k_{n-1}}|^p
\one_{\{T_{n}^k\le T\}}
\nonumber\\
&&\qquad\quad{}+
\mathbb{E}^{1/p}\sup_{n\ge1}\bigl|\delta^kX_{T^k_{n-1}}-X_{T^k_{n-1}}\bigr|^p
\one_{\{T_{n}^k\le T\}}. \nonumber
\end{eqnarray}

The first\vspace*{2pt} and last terms in (\ref{a17}) vanish. Moreover, $\lim_{k\rightarrow\infty}\mathbb{E}^{1/p}\sup_{n\ge
1}|X_{T^k_n}-X_{T^k_{n-1}}|^p\one_{\{T_{n}^k\le T\}}=0$ because of the
path continuity of $X$ together with the fact that $\sup_{n\ge
1}|T^k_n-T^k_{n-1}|\rightarrow0$ a.s. as $k\rightarrow\infty$.
\end{pf}
Now we are in position to prove the existence of the stochastic derivative.

\begin{theorem}\label{mainint}
Let $X$ be a Wiener functional satisfying the assumptions of Theorem
\ref{mainresult} with the weak decomposition represented by
%
\begin{equation}
\label{klj} X=X_0 + \int H^X\,dB_s +
N^X
\end{equation}
for an adapted process $H^X$ in $L^2(\lambda\times\mathbb
{P})$ and $\langle N^X,B \rangle^\delta=0$. Then $H^X$ can be
approximated by the $L^2(\lambda\times\mathbb{P})$-weak limit $\mathcal
{D}X=\lim_{k\rightarrow\infty}\mathcal{D}^kX=H^X$.
\end{theorem}
\begin{pf}
The unique orthogonal decomposition (\ref{klj}) represented by an
adapt\-ed process $H^X$ is a consequence of Theorem~\ref{mainresult}
together with the martingale representation of the Brownian motion.
Therefore, it only remains to prove the existence of $\mathcal{D}X$.
For this, let us consider a finite energy Wiener functional $X$ and let
us fix $0\le t\le T$ and $g\in L^\infty$. In order\vspace*{1pt} to shorten notation,
let us write $\xi^k_n:=(T^k_n-T^k_{n-1})\one_{\{T^k_n \le T\}}$,
$g^k_n:=\mathbb{E}[g|\mathcal{G}^k_n]-\mathbb{E}[g|\mathcal
{G}^k_{n-1}]$ for $k,n\ge1$ and\vspace*{2pt} $C$ is a constant which may differ
from line to line. By the very definition, for every $k\ge1$ and $t>0$,
%
\begin{eqnarray}
\label{first1}
g\int_0^t\mathcal{D}_s^kX\,ds
&=& g\sum_{n=1}^\infty\mathcal
{D}_{T^k_{n-1}}\delta^kX\xi^k_n
\one_{\{T_{n-1}^k\le t\}}\nonumber\\[-8pt]\\[-8pt]
&&{} - g\sum_{n=1}^\infty
\mathcal{D}_{T^k_{n-1}}\delta^kX\bigl(T^k_n-t
\bigr)\one_{\{
T_{n-1}^k< t\le T^k_n\}}.\nonumber
\end{eqnarray}
At first, a simple application of Remark~\ref{indep} and the very
definition of $\mathcal{D}^kX$ yield
%
\begin{equation}
\label{v1}\qquad \mathbb{E}\int_0^T\bigl|
\mathcal{D}_s^kX\bigr|^2\,ds = \mathbb{E}\sum
_{n=1}^\infty \bigl|\mathcal{D}_{T^k_{n-1}}
\delta^kX\bigr|^2\xi^k_n
\one_{\{T_{n-1}^k\le T\}
}\le\mathcal{E}_2(X),\qquad k\ge1.
\end{equation}
By H\"{o}lder inequality and (\ref{v1}), the second term in (\ref
{first1}) vanishes as follows:
\begin{eqnarray*}
&&
\mathbb{E}\sum_{n=1}^\infty\bigl|g
\mathcal{D}_{T^k_{n-1}}\delta^kX\bigl(T^k_n-t
\bigr)\bigr|\one_{\{T_{n-1}^k< t\le T^k_n\}} \\[-1pt]
&&\qquad\le C\mathbb{E}\sum_{n=1}^\infty\bigl|
\mathcal{D}_{T^k_{n-1}}\delta^kX\bigr|\xi^k_n
\one_{\{
T_{n-1}^k < t \le T^k_n\}}
\\[-1pt]
&&\qquad\le C\mathcal{E}^{1/2}_2(X)\times\mathbb{E}^{1/2}
\sup_{n\ge1}\bigl|\xi^k_n\bigr|\one_{\{T_{n}^k\le T\}}
\rightarrow 0 \qquad\mbox{as } k\rightarrow\infty.
\end{eqnarray*}
By using Remark~\ref{indep}, we shall write
%
\begin{eqnarray}
\label{v2} \quad\mathbb{E} g\sum_{n=1}^\infty
\mathcal{D}_{T^k_{n-1}}\delta^kX\xi^k_n
\one_{\{T_{n-1}^k\le t\}}&=& \mathbb{E}\sum_{n=1}^\infty
g^k_n\mathcal{D}_{T^k_{n-1}}\delta^kX
\xi^k_n \one_{\{
T_{n-1}^k\le t\}}
\nonumber\\[-8pt]\\[-8pt]
&&{}+ \mathbb{E} g\sum_{n=1}^\infty\Delta
\delta^kX_{T^k_{n-1}}\Delta A^k_{T^k_{n-1}}
\one_{\{T_{n-1}^k\le t\}}.
\nonumber
\end{eqnarray}
The first term in (\ref{v2}) vanishes as follows. By applying Lemma \ref
{gkn}, (\ref{v1}) and H\"{o}lder inequality we have
\begin{eqnarray*}
\mathbb{E}\sum_{n=1}^\infty
\bigl|g^k_n\mathcal{D}_{T^k_{n-1}}\delta^kX
\xi^k_n \bigr|\one_{\{T_{n-1}^k\le t\}}&\le&\mathcal
{E}^{1/2}_2(X)\mathbb{E}^{1/2}\sup_{n\ge1}\bigl|g^k_n\bigr|^2
\sum_{n=1}^\infty\xi^k_n
\one_{\{T_{n-1}^k\le t\}}
\\
&\rightarrow& 0 \qquad\mbox{as } k\rightarrow\infty.
\end{eqnarray*}
Summing up the above arguments, we arrive at the following conclusion:
%
\begin{equation}
\label{fidederi1} \lim_{k\rightarrow\infty}\mathbb{E}g\int_0^t
\mathcal{D}^k_sX\,ds \mbox{ exists}\quad\mbox{if, and only if}\quad
\lim_{k\rightarrow\infty}\mathbb{E}g\bigl[\delta^kX,A^k
\bigr]_t \mbox{ exists}.\hspace*{-28pt}
\end{equation}
In other words, the estimate (\ref{v1}), the existence of $\langle X,
B\rangle^\delta$ and (\ref{fidederi1}) allow us to conclude that
\[
\lim_{k\rightarrow\infty}\mathcal{D}^kX \qquad\mbox{exists weakly in }
L^2(\lambda\times\mathbb{P}).
\]
It follows from the above steps that
%
\begin{eqnarray}
\label{fidederi2} \lim_{k\rightarrow\infty}\mathbb{E}g\int_0^t
\mathcal {D}^k_sX\,ds&=&\mathbb{E}g\langle X, B
\rangle^\delta_t=\mathbb {E}g\bigl[M^X,B
\bigr]_t
\nonumber\\[-8pt]\\[-8pt]
&=&\mathbb{E}g\int_0^tH^X_s\,ds,\qquad
0\le t\le T, g\in L^\infty,
\nonumber
\end{eqnarray}
where the martingale component is represented by a
progressive process $H^X$ in $L^2(\lambda\times\mathbb{P})$, that is,
$M^X_t=\int_0^tH^X_s\,dB_s$ for $0\le t\le T$. Identity (\ref{fidederi2})
shows that $\mathcal{D}^k X\rightarrow H^X$ weakly in $L^2(\lambda
\times\mathbb{P})$ as $k\rightarrow\infty$. The proof of the theorem
is complete.
\end{pf}
%
\begin{remark}\label{mainint3}
Under assumption $\mathcal{E}_2(X)< \infty$, relations
(\ref{fidederi1}) and (\ref{fidederi2}) allow us to conclude that $\lim_{k\rightarrow\infty}\mathcal{D}^{k}X$ exists weakly in $L^2(\lambda
\times\mathbb{P})$ if, and only if, the $\delta$-covariation $\langle
X, B\rangle^\delta$ exists. By Proposition~\ref{keyresult}, $\lim_{k\rightarrow\infty}\mathcal{D}^{k}X$
exists if, and only if, $\lim_{k\rightarrow\infty}M^{k,X}$ exists which shows a strong robustness
of our approximation scheme. In this case, $\lim_{k\rightarrow\infty
}\mathcal{D}^kX=H^X$ where $M^X=\int H^X_s\,dB_s=\lim_{k\rightarrow\infty
}M^{k,X}$.
\end{remark}

By applying Theorem~\ref{mainint} to the classical It\^o representation
theorem we arrive at the following result.
%
\begin{corollary}\label{clarkformula}
If $F$ is an $\mathcal{F}_T$-square integrable random variable, then
\[
F = \mathbb{E}[F] + \int_0^T
\mathcal{D}_sF\,dB_s,
\]
where
%
\begin{equation}
\label{representation} \mathcal{D}F =\lim_{k\rightarrow\infty} \sum
_{n=1}^\infty\frac{\mathbb
{E}[F|\mathcal{G}^k_n]
- \mathbb{E}[F|\mathcal{G}^k_{n-1}]}{B_{T^k_n}-B_{T^k_{n-1}}}\one_{
\rbra T_{n}^k, T^k_{n+1}\rbra }
\end{equation}
weakly in $L^2(\lambda\times\mathbb{P})$.
\end{corollary}
\begin{pf}
If\vspace*{1pt} $X_t=\mathbb{E}[F|\mathcal{F}_t]$, $0\le t\le T$, then $\delta^kX_t=\mathbb{E}[F] + \oint_0^t\mathcal{D}_s\delta^kX\,dA^k_s$ where
$\delta^kX_{T^k_n}=\mathbb{E}[F|\mathcal{G}^k_n]; k,n\ge1$. Since\vspace*{1pt} $X$
is a square-integrable martingale, a~simple application of Theorem \ref
{mainint} yields (\ref{representation}).
\end{pf}
%
\begin{remark}
Corollary~\ref{clarkformula} and\vspace*{1pt} the classical Clark--Ocone formula
yield $\mathcal{D}_tF = \mathbb{E}[D_tF|\mathcal{F}_t]$ where $D$
denotes the Gross--Sobolev derivative of $F$ in $L^2(\mathbb{P})$. If
$F$ is not differentiable in the sense of Malliavin calculus, the
Gross--Sobolev derivative $D_tF$ is interpreted as a generalized
process where $\mathbb{E}[D_tF|\mathcal{F}_t]$ can be interpreted as a
real-valued process in $L^2(\lambda\times\mathbb{P})$ (see, e.g.,~\cite{bermin} for more details).
\end{remark}

\section{The Clark--Ocone formula algorithm}\label{sec5}\label{numericalsection}
In this section, we illustrate the theory developed in this article
with some numerical examples. The goal here is to show that our
approximation scheme can be easily implementable where a step-by-step
algorithm for the Clark--Ocone formula is presented. We illustrate the
method with the problem of hedging contingent claims in a complete
market. For simplicity of exposition and comparison with exact known
formulas, we will work on a simple diffusion setup together with
well-known types of contingent claims. We stress that the algorithm
presented in Section~\ref{ALG} holds for any square integrable $\mathcal
{F}_T$-random variable.

In this section, the market consists of two assets: one riskless asset
$S^0$ and one risky asset $S$ with continuous paths. We will specify
the evolution of the assets directly under the unique equivalent
martingale measure $\mathcal{Q}$ together with the respective $\mathcal
{Q}$-Brownian motion $W$. We assume that they are given by
%
\begin{equation}
\label{risky} dS^0_t=rS^0_t\,dt,\qquad
S^0_0=1;\qquad dS_t = rS_t\,dt + \sigma
S_t\,dW_t,
\end{equation}
where $\sigma>0$ and $r>0$. It is well known (see, e.g.,
\cite{karatzas,bermin}) that for any given contingent claim $F\in
L^2(\mathcal{Q}, \mathcal{F}_T)$, the correspondent replicating
strategy $\theta$ is derived by the Clark--Ocone--Karatzas
\cite{karatzas} formula as
%
\begin{equation}
\label{theta} \theta_t= e^{-r(T-t)} \sigma^{-1}(S_t)^{-1}F^o_t,
\end{equation}
where $F^o_\cdot:=\mathbb{E}_{\mathcal{Q}}[D_\cdot F|\mathcal
{F}_\cdot]$ and $D$ is the Gross--Sobolev derivative. Since the
filtration generated by $W$ coincides with $\mathbb{F}$, Theorem \ref
{clarkformula} still holds under the correspondent martingale measure
$\mathcal{Q}$ as well. In the sequel, with a slight abuse of notation,
we also denote by $\mathbb{E}$ the expectation under the measure~$\mathcal{Q}$.

%
\begin{remark}
In Fournie et al.~\cite{FOURNIE1} and also in a series of
papers~\cite{CVI,elie,ben,higa}, the idea is to express the
optional projection $F^o$ by $\mathbb{E}[F. G| \mathcal{F}_\cdot]$ for
a suitable random variable $G$ which in general is represented by a
Skorohod integral. In this case, a smooth underlying Markovian
structure plays a key role. In this work, we take a rather different
strategy which is fully based on the information generated by the
stopping times $(T^k_n)_{k,n\ge1}$ which allows us to treat any
$L^2(\mathcal{F}_T)$-random variable (see also Remark~\ref{sspositive}).
\end{remark}

To illustrate our method, we will study three types of derivatives: a
European call option, a digital option and a barrier option given,
respectively, by
%
\begin{equation}
\label{payoffs} \max\{S_T-K, 0\},\qquad \one_{\{ S_T \leq K \}},\qquad
\one_{\{ M^{S}_{0,T} \leq
K \}},
\end{equation}
where\vspace*{1pt} $M^{S}_{0,T}:=\sup_{0\le t\le T}|S_t|$. It is well
known that for these types of claims, there exist closed formulas for
hedging (see, e.g.,~\cite{bermin}, examples 4.1 and 5.3).

\subsection{The algorithm}\label{sec5.1}\label{ALG}
The method is fully based on the space-filtration discretization scheme
induced by the stopping times $\{T^k_n; k,n\ge1 \}$. In the sequel, we
fix an $L^2(\mathcal{F}_T)$-random variable $F$ and our goal is to
describe an algorithm to calculate the optional projection $F^o_0$
which yields the hedging $\theta_t$ at time $t=0$. The other times can
be recovered from this case by a standard shift argument. From\vadjust{\goodbreak} (\ref
{representation}), it follows that for sufficiently small $\varepsilon
>0$ and $k$ large enough,
%
\begin{equation}
\label{F0ap} \frac{1}{\varepsilon}\mathbb{E} \int_{0}^{\varepsilon}
\mathcal {D}^k_sF\,ds = \frac{1}{\varepsilon}\mathbb{E} \int
_{T^k_1}^{\varepsilon
}\mathcal{D}^k_sF\,ds
\sim\frac{1}{\varepsilon}\mathbb{E}\int_{0}^{\varepsilon}F^o_s\,ds
\sim\mathbb{E}F^o_0
\end{equation}
as long as $0$ is a Lebesgue point of $t\mapsto\mathbb
{E}F^o_{t}$. For the purpose of hedging, we may assume that this is the
case. Otherwise, we shall always find a point in a neighborhood of
$t=0$ such that (\ref{F0ap}) holds. In order to speed up the
convergence of the algorithm we take
\[
\mathbb{E}\frac{1}{\varepsilon- T^k_1} \int_{T^k_1}^{\varepsilon
}
\mathcal{D}^k_sF\,ds,
\]
instead of $\frac{1}{\varepsilon}\mathbb{E} \int_{T^k_1}^{\varepsilon}\mathcal{D}^k_sF\,ds$ in (\ref{F0ap}). One should
notice that $\frac{1}{\varepsilon-T^k_1}\one_{\{T^k_1 < \varepsilon\}
}\rightarrow\frac{1}{\varepsilon}$ in $L^2$ as $k\rightarrow\infty$
and therefore
%
\begin{equation}
\label{F0ap1} \mathbb{E}\frac{1}{\varepsilon- T^k_1} \int_{T^k_1}^{\varepsilon
}
\mathcal{D}^k_sF\,ds \sim\mathbb{E}F^o_0
\end{equation}
for $k$ sufficiently large. By the very definition,
%
\begin{eqnarray}
\label{F0ap2} \frac{1}{\varepsilon- T^k_1} \int_{T^k_1}^{\varepsilon}
\mathcal {D}^k_sF\,ds &=& \frac{1}{\varepsilon- T^k_1} \Biggl[\sum
_{n=1}^{\infty} \mathcal {D}^k_{T_{n}^k}
F\bigl(T_{n+1}^k - T_{n}^k\bigr)
\one_{ \{T_{n+1}^k \leq
\varepsilon\} }
\nonumber\\[-8pt]\\[-8pt]
&&\hspace*{35pt}{} + \sum_{n=1}^{\infty} \mathcal{D}^k_{T_{n}^k}
F \bigl(\varepsilon- T_{n}^k\bigr) \one_{ \{ T_{n}^k < \varepsilon\leq
T_{n+1}^k\} }
\Biggr].
\nonumber
\end{eqnarray}
The whole structure of the algorithm is based on the perfect simulation
of the first passage times $\{T^k_n-T^k_{n-1};k,n\ge1\}$. Based on the
density of $T^k_1$, Burq and Jones~\cite{burq} proposes a very simple
and efficient algorithm. We refer the reader to this work for a
detailed exposition of the perfect simulation method for the stopping
times.\vspace*{8pt}

(S1) \textit{Simulation of $\{A^k;k\ge1 \}$}.

\begin{itemize}
\item One chooses $k\ge1$ which represents the discrimination level of
the Brownian motion.

\item One generates the stopping times $\{T_{n}^k-T^k_{n-1}; n\ge1\}$
according to the algorithm described by~\cite{burq}.

\item One simulates the family $\{\sigma^k_n;n\ge1 \}$ independently
from \mbox{$\{T^k_n-T^k_{n-1};n\ge1\}$}. The i.i.d. family $\{\sigma^k_n;n\ge
1 \}$ must be simulated according to the Bernoulli random variable
$\sigma^k_1$ such that $\mathbb{P}[\sigma_1^k = i]=1/2$ for $i=-1,1$.
This simulates the jump process $A^k$.
\end{itemize}
The next step is the simulation of $\mathcal{D}^kF$ where the
conditional expectations $\{\mathbb{E}[F|\mathcal{G}^k_n]; n,k\ge1\}$
play a key role.\vspace*{8pt}

(S2) \textit{Simulation of the stochastic derivative}.

\begin{itemize}
\item Fix a small $\varepsilon> 0$.\vadjust{\goodbreak}

\item Generate one sample of $A^k$ according to (S1) for a
large $k\ge1$. From this sample, one takes $(t_{1}^k, \sigma^k_1);
\ldots; (t_{n}^k, \sigma^k_n)$ such that $t_{n}^k < \varepsilon\leq
t_{n+1}^k$.\vspace*{1pt}

\item For each $0 \leq j \leq n$, one applies Monte Carlo simulation to
obtain an approximation of $\mathbb{E}[F|(t_{1}^k, \sigma^k_1), \ldots,
(t_{j}^k, \sigma^k_j)]$ (see Remark~\ref{sspositive}). This object is
denoted by $\hat{\mathbb{E}}[F|(t_{1}^k, \sigma^k_1), \ldots, (t_{j}^k,
\sigma^k_j)]$.
\end{itemize}
Therefore, an approximation for the stochastic derivative $\mathcal
{D}^kF$ is given by
\[
\hat{\mathcal{D}}^k_{t_{j}^k} F:= \frac{\hat{\mathbb{E}}[F|(t_{1}^k,
\sigma^k_1), \ldots, (t_{j}^k, \sigma^k_j)]- \hat{\mathbb
{E}}[F|(t_{1}^k, \sigma^k_1), \ldots, (t_{j-1}^k, \sigma^k_{j-1})]}{2^{-k} \sigma^k_j}
\]
for $1\le j\le n$. Then one can define the following object according
to (\ref{F0ap1}) and (\ref{F0ap2}):
\[
\hat{F}^o_0(\varepsilon,k):= \frac{1}{\varepsilon- t^k_1} \Biggl[
\sum_{j=1}^{n-1} \hat{\mathcal{D}}^k_{t_{j}^k}
F\bigl(t_{j+1}^k - t_{j}^k\bigr) +
\hat {\mathcal{D}}^k_{t_{n}^k} F \bigl(\varepsilon-
t_{n}^k\bigr) \Biggr].
\]
\begin{itemize}
\item From (\ref{F0ap1}) and (\ref{theta}), the correspondent
replicating strategy for this path can be approximated by
\[
\hat{\theta}_0(\varepsilon,k):= e^{-r(T)}
\sigma^{-1}(S_0)^{-1} \hat {F}^o_0(
\varepsilon,k).
\]
\item Repeat these steps several times and take the mean of the
strategies $\hat{\theta}_0(\varepsilon,k)$ as the estimative for the
replicating strategy $\theta_t$ at the initial point $t=0$.
\end{itemize}
%
\begin{remark}\label{sspositive}
The methodology presented in this section is rather general in the
sense that the only assumption that is made is the possibility to
simulate the expectation ${\mathbb{E}}[F|(t_{1}^k, \sigma^k_1), \ldots,
(t_{j}^k, \sigma^k_j)]$ by a Monte\vspace*{1pt} Carlo method. In the classical
Black--Scholes setup, one can simulate it by means of random samples
generated by
%
\begin{equation}
\label{bsequation} S_t^k= \exp \bigl[ \bigl(r-
\tfrac{1}{2}\sigma^2\bigr)t + \sigma A^k_t
\bigr],\qquad 0\leq t \leq T.
\end{equation}
In the general positive semimartingale case, the expectation can be
simulated based on the Euler--Maruyama method, for example, for
\[
dS_t=r_tS_t\,dt+\sigma_tS_t\,dW_t;\qquad
0\le t\le T,
\]
with the increments $(W_{T^k_n}-W_{T^k_{n-1}})$ computed in terms of
$(T^k_n)$ and $\sigma$ can be random.
\end{remark}

\subsection{Numerical examples}\label{sec5.2}
As a simple illustration of our method, we consider three types of
derivatives: a European call option, a digital option and a barrier
option. The stock price is $49$, the risk-free interest rate is $5\%$
per annum, the stock price volatility is $20 \%$ per annum, the time to
maturity is $20$ weeks ($0.3846$ years) and the expected return from
the stock is $13 \%$ per annum. We use strike price $K=50$ for the\vadjust{\goodbreak}
European call option and digital option and $K=55$ for the barrier
option. In order to develop the simulation process we choose a
discrimination level of order $k=4$. The main point of the algorithm is
the approximation of the conditional expectations by Monte Carlo
simulation. For each case, we generate 10,000 paths of $A^k$, we
evaluate the payoff function on each path based on (\ref{bsequation})
and we take the mean as the estimative of the conditional expectation.
We choose $\varepsilon=0.02$ and we generate $1000$ samples of $A^k$
stopped at $0.02$.

For each sample, we calculate the respective hedging value at time
$t=0$. The estimative of the hedging at time $t=0$ is given by the mean
of the hedging values from the correspondent samples (see Figure \ref
{options}). The $\% \mathit{Error}$ is the absolute value of the difference
between estimated value and exact value divided by the exact value.
With $1000$ paths, we obtain an error of $0.57 \%$ for the European
call option, $0.48 \%$ for the digital option and $0.1 \%$ for the
barrier option.

\begin{sidewaysfigure}

\includegraphics{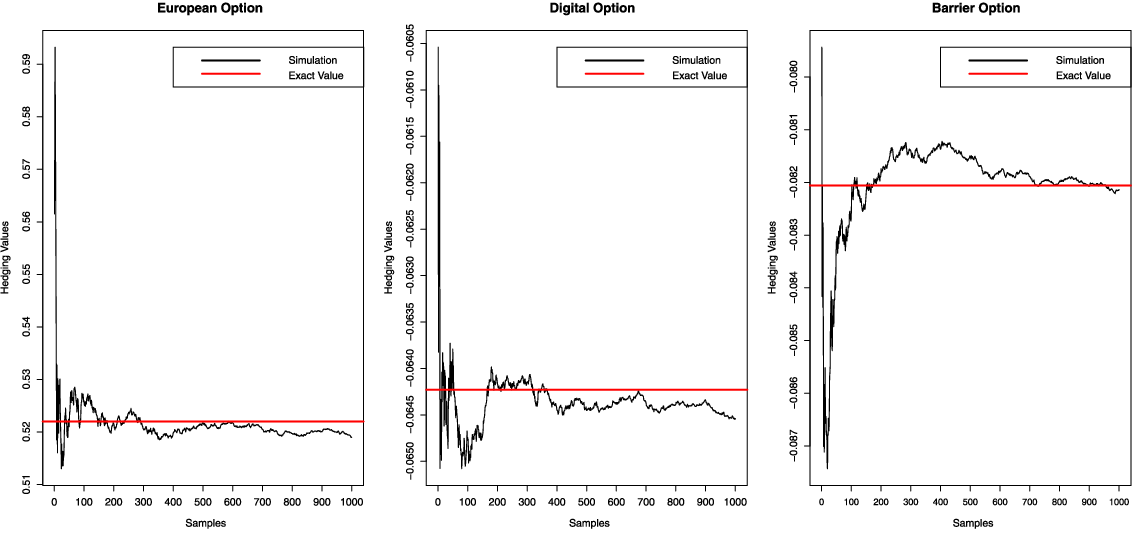}

\caption{Monte Carlo simulations for hedging.}\label{options}
\end{sidewaysfigure}

At this stage, one can say that the method proposed in this work is
rather general compared with the more classical ones based on the
existence of densities (see, e.g.,~\cite{higa} and other references
therein). Moreover, it does not require further smoothness assumptions
and it can be easily implementable without requiring advanced
mathematical calculations as in the classical literature in
mathematical finance.

\section{Optimal stopping}\label{sec6}
In this section, we illustrate the techniques developed in this paper
with the optimal stopping time problem based on a Wiener
functional~$X$. Throughout this section we fix a bounded positive
Wiener functional $X$ with continuous paths. To shorten notation, in
this section we shall extend the time domain of all stochastic
processes $X$ to $[0, \infty)$ as follows: $X_t = X_T$ for every $t
\geq T$. In the sequel, we denote the set of all $\mathbb{F}$-stopping
times by $\operatorname{ST}(\mathbb{F})$.
%
\begin{definition}
For a fixed $\varepsilon> 0$, we say that the stopping time $\tau^{\varepsilon}$ is $\varepsilon$-optimal if
\[
\mathbb{E} X_{\tau^{\varepsilon}} \geq\sup_{\tau\in \operatorname{ST}({\mathbb{F}})} \mathbb{E}
X_{\tau} - \varepsilon.
\]
\end{definition}

From Remark~\ref{ucpconv}, we know that $\lim_{k\rightarrow\infty
}\delta^kX=X$ strongly in $\mathrm{B}^2$ and therefore,
%
\begin{equation}
\label{supconv} \sup_{\tau\in \operatorname{ST}(\mathbb{F})} \mathbb{E}\delta^kX_{\tau}
\rightarrow\sup_{\tau\in \operatorname{ST}(\mathbb{F})} \mathbb{E} X_{\tau}
\qquad\mbox{as } k
\rightarrow \infty.
\end{equation}

Let $\mathcal{U}^k$ be the class of totally inaccessible $\mathbb
{F}^k$-stopping times. It is well known (see, e.g.,~\cite{yan}) that $S
\in\mathcal{U}^k$ if, and only if, $S= \sum_{n=1}^{\infty} T^k_{n}
\one_{ \{S=T^k_n\}}$. The fact that $\delta^k X $ is constant on the
stochastic interval $\lbra T_{n}^k, T_{n+1}^k\lbra $ for each $n \geq0$ and
convergence (\ref{supconv}) allow us to state the following result.

\begin{lemma}For each $\varepsilon> 0 $, we have
\[
\sup_{\tau\in \operatorname{ST}(\mathbb{F})} \mathbb{E} X_{\tau} \leq\sup_{\tau^k \in
\mathcal{U}^k}
\mathbb{E} \delta^k X_{\tau^k} + \varepsilon
\]
for every $k$ sufficiently large.
\end{lemma}
%
\begin{definition}
For a given $\varepsilon\ge0$, we say that $\tau^{k,\star} (\in
\mathcal{U}^k)$ is $(k, \varepsilon)$-optimal, if
\[
\mathbb{E} \delta^k X_{\tau^{k, \star}} \ge\sup_{\tau^k \in\mathcal
{U}^k}
\mathbb{E} \delta^k X_{\tau^k} - \varepsilon.
\]
\end{definition}
Summing up the above results together with Remark~\ref{ucpconv}, the
following proposition holds.
%
\begin{proposition}\label{optimalst}
Let $X$ be a bounded positive continuous Wiener functional. For a given
$\varepsilon>0$, each $(k,0)$-optimal stopping time $\tau^{k,\star}$
is $\varepsilon$-optimal for every $k$ sufficiently large.
\end{proposition}

\subsection{A dynamic programming principle}\label{sec6.1}\label{dpp}
In the sequel, we provide a dynamic programming principle to
approximate a $(k, 0)$-optimal stopping time for any Wiener functional
$X$ satisfying the assumptions of Proposition~\ref{optimalst}. Let
$S^k$ be the Snell envelope\vspace*{1pt} of $\delta^kX$, that is, the
minimal positive $\mathbb{F}^k$-supermartingale which dominates
$\delta^k X$. The dynamic programming principle can be written as
follows. For a fixed $\omega\in\{T_{n}^k \leq T < T_{n+1}^k\}$, we
shall write
\[
\cases{ S^k_{T_{n}^k} (\omega) = \delta^k
X_{T_{n}^k} (\omega),
\cr
S^k_{T_{j}^k } (\omega) = \max
\bigl\{\delta^k X_{T_{j}^k } (\omega); \mathbb{E} \bigl[
S^k_{T_{j+1}^k} \mid\mathcal{G}^k_j
\bigr](\omega) \bigr\}, &\quad $j \leq n$.}
\]
By a backward induction argument, the dynamic programming principle can
be written in terms of optimal stopping
times as
%
\begin{equation}
\label{algppd} \cases{ \tau^{k,\star}_n (\omega):=
T_{n}^k (\omega),
\vspace*{1pt}\cr
\tau^{k,\star}_{n-1} (
\omega):= T_{n-1}^k (\omega) \one_{ G^{k}_{n-1}} (\omega) +
\tau^{k,\star}_{n}(\omega) \one_{  (G^{k}_{n-1}
)^c} (\omega),
\cr
\tau^{k,\star}_{j} (\omega):= T_{j}^k (
\omega) \one_{ G^{k}_j} (\omega ) + \tau^{k,\star}_{j+1} (
\omega) \one_{  (G^{k}_j  )^c} (\omega);&\quad $\omega\in\Omega$,}
\end{equation}
where
%
\begin{equation}
\label{algppd1} G^{k}_j:= \bigl\{\delta^k
X_{T_{j}^k} \geq\mathbb{E} \bigl[\delta^k X_{\tau^{k,\star}_{j+1}}
\mid\mathcal{G}^k_j \bigr] \bigr\},\qquad j\le n-1.
\end{equation}
In this case, the $\tau^{k,\star}_0$ is $(k,0)$-optimal and the value
function is given by
%
\begin{equation}
\label{algppd2} \mathbb{E} S^k_0 = \mathbb{E}
\delta^k X_{\tau^{k,\star}_0}.
\end{equation}

\subsection{Non-Markovian examples}\label{sec6.2} We shall consider a significant
class of non-Markovian examples which fits into the assumptions of
Proposition~\ref{optimalst}. For instance, for a given bounded
continuous function\vadjust{\goodbreak} $f\dvtx\mathbb{R}\rightarrow\mathbb{R}_+$, let us
consider the following Wiener functionals:
%
\begin{equation}
\label{nexamples} f\bigl(B^H\bigr) \qquad\mbox{for } H \in(0,1),
\end{equation}
where $B^H$ is the fractional Brownian motion with parameter $H\in
(0,1)$. Based on the simulation of $\{(T^k_n-T^k_{n-1}, \sigma^k_n);n\ge1 \}$ [see
(S1) in Section~\ref{numericalsection}]
and the dynamic programming principle for $X=f(B^H)$ in the last
section, it is straightforward to develop an algorithm to approximate a
$(k,0)$-optimal stopping time. In the sequel, the index $\ell\in\{
1,\ldots, N \}$ encodes the $\ell$th iteration in a given\vspace*{1pt} dynamic
programming procedure and $\{A^{k,\ell}\}_{\ell=1}^N$ are independent
copies of $A^k$.\vspace*{8pt}

(A1) \textit{Dynamic programming algorithm.}

\begin{itemize}
\item Fix a large $k\ge1$ and generate one sample from $A^{k,\ell}$
based on (S1) and take $(t^{k,\ell}_1,
\sigma^{k,\ell}_1),\ldots, (t^{k,\ell}_n,\sigma^{k,\ell}_n)$ such that
$t^{k,\ell}_n\le T <
t^{k,\ell}_{n+1}$.

\item One sets $\tau^{k,\ell, \star}_{n}=t^{k,\ell}_n$.

\item One proceeds backward by taking the time $\tau^{k,\ell, \star
}_{j-1}$ given by
\[
\hspace*{-6pt}\tau^{k,\ell, \star}_{j-1}= \cases{ t^{k,\ell}_{j-1}, &\quad
if $f \bigl(A^{k,\ell,H}_{t_{j-1}^{k,\ell}} \bigr) \geq\mathbb{E} \bigl[ f
\bigl( A^{k,H}_{\tau^{k,\ell, \star}_j} \bigr) \mid \bigl(t^{k,\ell}_1,
\sigma^{k,\ell}_1\bigr),\ldots, \bigl(t^{k,\ell}_{j-1},
\sigma^{k,\ell}_{j-1}\bigr) \bigr]$,
\vspace*{2pt}\cr
\tau^{k,\ell,\star}_{j},
&\quad if $f \bigl(A^{k,\ell,H}_{t_{j-1}^{k,\ell}} \bigr) < \mathbb{E} \bigl[ f
\bigl( A^{k,H}_{\tau^{k,\ell,\star}_j} \bigr) \mid\bigl(t^{k,\ell
}_1,
\sigma^{k,\ell}_1\bigr),\ldots, \bigl(t^{k,\ell}_{j-1},
\sigma^{k,\ell}_{j-1}\bigr) \bigr]$,}\hspace*{-4pt}
\]
\end{itemize}
for any $ j \leq n$. The value $A^{k,\ell,H}_t$ is obtained from
$A^{k,\ell}$ via the Volterra representation of the fractional Brownian
motion as
%
\begin{equation}
\label{volterraS} A^{k,\ell,H}_{t}:= \int_0^{t}
K(t,s)\,dA^{k,\ell}_s,\qquad 0\le t\le T, H \in(0,1),
\end{equation}
for a suitable square-integrable kernel $K(t,s)$ (see, e.g.,
\cite{hu}). The conditional expectations
\[
\mathbb{E} \bigl[f \bigl( A^{k,H}_{\tau^{k,\ell,\star}_j} \bigr) \mid
\bigl(t^{k,\ell}_1,\sigma^{k,\ell}_1\bigr),
\ldots, \bigl(t^{k,\ell}_{j-1}, \sigma^{k,\ell}_{j-1}
\bigr) \bigr],\qquad j\le n,
\]
are approximated by Monte Carlo methods via simulation of $A^{k,\ell}$
described in (S1), Section~\ref{ALG} and (\ref{volterraS}).
\begin{itemize}
\item One repeats the previous steps several times, $\ell=1, \ldots,
N$ and the optimal value function (\ref{algppd2}) is approximated by $
\frac{1}{N} \sum_{\ell=1}^N f  ( A^{k,\ell,H}_{\tau^{k,\ell,\star
}_0}  )$ for large $N$.
\end{itemize}
The above formulation in terms of stopping rules (rather than in terms
of value functions) is essential to our approach as well as in other
probabilistic methods based on discretizations of the Snell envelope.
The main feature of this methodology is the computation of conditional
expectations which is generically based on the following alternatives:
projections on $L^2(\mathbb{P})$ (see, e.g.,~\cite{Longstaff}),
quantization (as in~\cite{bally}) and representation formulas based on
Malliavin calculus (see, e.g.,~\cite{touzi}).\vadjust{\goodbreak} An important drawback of
all these methodologies is that they essentially rely on an induced
Markov chain arising from a time-discretization scheme of a
continuous-time Markov process. Dynamic programming methods are not
directly usable in genuinely non-Markovian cases due to the nontrivial
time-correlation generated by the driving noise.

We circumvent this problem by introducing a space-filtration
discretization scheme which allows us to write the original optimal
stopping problem in terms of the information flow $(\mathcal
{G}^k_n)_{k,n\ge1}$. Conditional expectations appearing in the dynamic
programming principle (\ref{algppd})--(\ref{algppd2}) can now be fairly
simulated since most examples of interest can be viewed in terms of the
process $A^k$ as explained in Remark~\ref{sspositive}. A numerical
study is needed in order to precisely evaluate our method with the more
classical approaches, a topic which will be further explored in a
forthcoming paper.

\section*{Acknowledgments}

We are grateful for helpful discussions with Pedro Catuogno, Francesco
Russo and Frederi Viens.



\printaddresses

\end{document}